\newtheorem{theorem}{Theorem}[section]
\newtheorem{lemma}[theorem]{Lemma}
\numberwithin{equation}{section}
\def\Cs{\mathscr{C c 1234}}
\def\gnk{G_{n,k}}
\def\cgnk{\Cal G_{n,k}}
\def\rn{\bbr^n}
\def\sn{\bbs^n}
\def\sn1{\bbs^{n-1}}
\def\Cal{\mathcal}
\def\C{{\Cal C}}
\def\M{{\Cal M}}
\def\S{{\Cal S}}
\def\I{{\Cal I}}
\def\gnk{G_{n,k}}
\def\cgnk{\Cal G_{n,k}}
\def\f0{f_0}
\def\Fc0{\varphi_0}
\def\rn{\bbr^n}
\def\I_k {I_{-}^{k/2}}
\def\I+k {I_{+}^{k/2}}
\def\cd{\stackrel{*}{\C}\!{}_{m, k}^\a}
\def\sd{\stackrel{*}{\S}\!{}_{m, k}^\a}
\def\cd0{\stackrel{*}{\C}\!{}_{m, k}^\a}
\def\sd0{\stackrel{*}{\S}\!{}_{m, k}^\a}
\def\ncd0{\stackrel{*}{\Cs}\!{}_{m, k}^\a}
\def\bbe{{\Bbb E}}
\def\bbs{{\Bbb S}}
\def\bbm{{\Bbb M}}
\def\bbr{{\Bbb R}}
\def\bbh{{\Bbb H}}
\def\bbc{{\Bbb C}}
\def\bbz{{\Bbb Z}}
\def\const{{\hbox{\rm const}}}
\def\gnk{G_{n,k}}
\def\part{\partial}
\def\intl{\int\limits}
\def\a{\alpha}
\def\Del{\Delta}
\def\vp{\varphi}
\def\gam{\gamma}
\def\Lam{\Lambda}
\def\sig{\sigma}
\def\lam{\lambda}
\def\z{\zeta}
\def\const{{\hbox{\rm const}}}
\def\sgn{{\hbox{\rm sgn}}}
\def\part{\partial}
\def\intl{\int\limits}
\def\a{\alpha}
\def\sideremark#1{\ifvmode\leavevmode\fi\vadjust{\vbox to0pt{\vss
 \hbox to 0pt{\hskip\hsize\hskip1em
\vbox{\hsize2cm\tiny\raggedright\pretolerance10000
 \noindent #1\hfill}\hss}\vbox to8pt{\vfil}\vss}}}%
\newcommand{\be}{\begin{equation}}
\newcommand{\ee}{\end{equation}}
\newcommand{\bea}{\begin{eqnarray}}
\newcommand{\eea}{\end{eqnarray}}
\newcommand{\Bea}{\begin{eqnarray*}}
\newcommand{\Eea}{\end{eqnarray*}}
\begin{document}

\title[The Generalized Mader's  Inversion  Formulas]
{The Generalized Mader's  Inversion  Formulas for the Radon Transforms}

\author{ Y. A. Antipov }
\address{Department of Mathematics, Louisiana State University, Baton Rouge,
Louisiana 70803, USA}
\email{antipov@math.lsu.edu}

\author{ B. Rubin   }
\address{Department of Mathematics, Louisiana State University, Baton Rouge,
Louisiana 70803, USA}
\email{borisr@math.lsu.edu}

\thanks{ The first co-author was
 supported  by the NSF grant DMS-0707724.  The second co-author was
 supported  by the NSF grants PFUND-137 (Louisiana Board of Regents) and 
 DMS-0556157.}

\subjclass[2000]{Primary 44A12; Secondary 47G10}



\keywords{Radon transforms, inversion formulas, constant curvature space.}
\begin{abstract} In 1927 Philomena Mader derived elegant inversion formulas for the hyperplane Radon transform on $\bbr^n$. These formulas  differ from the original ones by Radon and seem to be forgotten. We generalize Mader's formulas to totally geodesic Radon transforms in any dimension on arbitrary constant curvature space. Another new interesting inversion formula for the $k$-plane transform was presented in the recent  book  ``Integral geometry and Radon transform" by S. Helgason. We extend this formula to arbitrary constant curvature space. The paper combines tools of integral geometry and complex analysis.
\end{abstract}

\maketitle

\section{Introduction}\label {hujkz}

Let  $ g(\theta,s)=\int f(x)\,dm (x)$ be
the Radon transform of a sufficiently good function $f$ on the Euclidean space  $\bbr^n$. Here, the integral is taken over a
 hyperplane $x\cdot \theta=s$, where $x\cdot \theta=x_1 \theta_1+\ldots+x_n \theta_n$ is the usual inner product,  
 $\theta$ is a point of  the unit sphere $\bbs^{n-1}\!=\!\{x\!\in\!\bbr^n:x^2_1\!+\!\ldots\!+\!x_n^2\!=\!1\}$, 
 and $dm (x)$ stands for the volume element on the hyperplane $x\cdot \theta=s$.  Mader's inversion formulas \cite {Mad} have the following elegant form:
 \bea\label {sui1}
 f(x)&=&A_0\frac{\partial^n}{\partial t^n}\left.F_0(x,t)\right|_{t=0} \qquad \mbox{\rm if $\,n$   is  even},
 \\
 \label {sui12}
 f(x)&=&A_1\frac{\partial^n}{\partial t^n}\left.F_1(x,t)\right|_{t=0} \qquad \mbox{\rm if $\,n$ is  odd},
\eea
where
$$
A_0=\frac{(-1)^{(n-2)/2}}{\pi(n-2)!\,\sigma_{n-2}}, \quad
A_1=\frac{(-1)^{(n-1)/2}}{2(n-2)!\,\sigma_{n-2}},
$$
$$
F_0(x,t)\!=\!\intl_{-\infty}^\infty \!G(x,s)\log|s\!-\!t|\,ds, \quad  F_1(x,t)\!=\!\intl_{-\infty}^\infty \!G(x,s)\,\sgn(s\!-\!t)\,ds,
$$
 $$
 G(x,s)=\frac{1}{\sigma_{n-1}}\intl_{\bbs^{n-1} }g(\theta,s+x\cdot \theta)\,d\sigma (\theta),
 $$
$d\sigma (\theta)$ is the surface element, and
 $\sigma_{n-1}$ is the
surface area of  $\bbs^{n-1}$.

An interesting feature of Mader's result is that  she  used tools of complex analysis (unlike most of other authors in the area).

In the present paper we derive more general inversion formulas, having the same nice structure as  (\ref{sui1}) and  (\ref{sui12}) and applicable to  totally geodesic Radon transforms in {\it any} dimension on {\it arbitrary} constant curvature space. We also generalize one recent inversion formula by S. Helgason, which has a close flavor and is  described  below.

Let $X$ be either the Euclidean space $\bbr^n$, the $n$-dimensional hyperbolic space $\bbh^n$, or the unit sphere
$\bbs^n$ in $\bbr^{n+1}$. We denote by   $\Xi$  the set of all $k$-dimensional totally
geodesic submanifolds of
 $ X, \, 1 \le k \le n-1$. The  totally geodesic Radon transform of a function $f$ on $X$
 is defined by
\be
  (R f)(\xi)=\intl_{\xi} f(x) \, d_\xi x, \qquad \xi \in \Xi,\ee
where
  $d_\xi x$ stands for  the corresponding canonical
  measure; see \cite{He} for details. Let $d(x,\xi)$ be the geodesic distance between $x \in X$ and $\xi
\in \Xi$. We introduce the {\it distance function} \be\label {mmi}
\rho(x,\xi)=\left\{ \!
 \begin{array} {ll} d(x,\xi) & \mbox{if $X=\bbr^n $,}\\
  \sin d(x,\xi) & \mbox{if $X=\bbs^n $,}\\
 \sinh \,d(x,\xi) & \mbox{if $X=\bbh^n $,}\\
  \end{array}
\right. \ee and the corresponding shifted dual Radon transform
\be\label {aaee}
(R^*_r\vp)(x)=\intl_{\rho(x,\xi)=r} \vp(\xi) \, d \mu (\xi), \qquad x \in X, \quad r>0.\ee
The last integral averages $\vp(\xi)$ over all $\xi \!\in \!\Xi$ satisfying $\rho(x,\xi)\!=\!r$.
After pioneering papers by P. Funk and J. Radon, it is known \cite {GGG, He, Ru02a,  Ru02b, Ru04b}
that $f$ can be explicitly reconstructed from $\vp=R f$ by applying  differentiation of order $k/2$ (if $k$ is odd this differentiation is fractional and interpreted in a suitable sense) to $R^*_r\vp$ in the $r^2$-variable. However, it was recently proved by Helgason \cite [p. 116]{He} that in the case $X=\bbr^n$ and $k$ even one can differentiate in $r$, rather than in $r^2$, and obtain the following result:
\be\label {hel}
f(x)=c_k \left [ \partial_r^k (R^*_r Rf)(x)\right ]_{r=0},
\ee
where $c_k$ is a constant depending only on $k$. The constant $c_k$ was not  evaluated in \cite {He}.

We  compute this constant explicitly and generalize (\ref{hel}) for  arbitrary constant curvature space $X$ and arbitrary $1 \le k \le n-1$.

{\bf Notation.} For the sake of simplicity,  throughout the paper, we assume $f$ to be infinitely differentiable. If $X$ is noncompact, we also assume that $f$ is rapidly decreasing together with derivatives of all orders.
The letter $c$ stands for a constant to be specified in every occasion; $\sigma_{n-1} =  2\pi^{n/2} \big/ \Gamma (n/2)$ is the
surface area of the unit sphere $\bbs^{n-1}$ in $\bbr^n$.

\section{Main Results}\label {Inv}
Let $\rho= \rho(x, \xi)$ be the distance function  (\ref{mmi}), $1 \le k \le n-1$. For $r>0$  we denote
\be\label {las8}
 (L^*_r \vp)(x)=\intl_{\Xi}
  \vp (\xi)\, \rho^{k+1-n} \,\sgn (\rho-r) \,d\xi, \ee
\be
 (\tilde L^*_r \vp)(x)=\intl_{\Xi}
  \vp (\xi)\, \rho^{k+1-n} \,\log |\rho^2-r^2| \,d\xi.\ee

  \begin{theorem} \label {koom} Let $\vp=R f$.  \hfill

 \noindent {\rm (i)} If $k$ is even, then
   \be\label {hel2m}
\partial_r^{k+1} (L^*_r \vp)(x)\Big |_{r=0}= d_X\,f(x),
\ee
 where
 \[d_X=\left\{ \!
 \begin{array} {ll} 2(-1)^{(k+2)/2}\sig_{n-k-1} \sig_{k-1}\,(k-1)!  & \mbox{if $\;X=\bbr^n, \bbh^n $,}\\
2\, \sigma_{n-k-1}\, \sigma_k\, \sigma_{k-1}\,(k-1)! /\sigma_n & \mbox{if $\;X=\bbs^n $.}\\
   \end{array}
\right.\]

 \noindent {\rm (ii)} If $k$ is odd, then
   \be\label {hel2}
\partial_r^{k+1} (\tilde L^*_r \vp)(x)\Big |_{r=0}=\tilde d_X\,f(x),
\ee
 where
 \[\tilde d_X=\left\{ \!
 \begin{array} {ll} \pi\,(-1)^{(k-1)/2} \sig_{n-k-1}\, \sig_{k-1} \,(k-1)! & \mbox{if $\;X=\bbr^n, \bbh^n $,}\\
2\pi (-1)^{(k-1)/2}\, \sigma_{n-k-1}\, \sigma_k\, \sigma_{k-1}\,(k-1)! /\sigma_n & \mbox{if $\;X=\bbs^n $.}\\
   \end{array}
\right.\]
\end {theorem}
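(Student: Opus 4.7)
The plan is to reduce the identity to a one-dimensional statement by duality and then to evaluate $\d_r^{k+1}$ of an explicit kernel at $r=0$. Substituting $\vp=Rf$ into (\ref{las8}) and interchanging the order of integration yields
\[
(L^*_r Rf)(x)=\int_X f(y)\,k_r(d(x,y))\,dy,
\]
where $k_r(u)$ is the integral of $\rho(x,\xi)^{k+1-n}\sgn(\rho(x,\xi)-r)$ over all $\xi\in\Xi$ passing through a fixed $y$ with $d(x,y)=u$. By the $G$-invariance of all measures, $k_r$ depends only on $u$; an analogous identity with $\log|\rho^2-r^2|$ in place of $\sgn(\rho-r)$ holds for $(\tilde L^*_r Rf)(x)$.

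Next I would compute the kernel explicitly. A totally geodesic $k$-plane through $y$ is parameterized by a $k$-dimensional subspace $V$ of the tangent space at $y$, and a short geometric computation gives $\rho(x,\xi)=\sig(u)\sin\theta$, where $\theta$ is the angle between $V$ and the initial direction of the geodesic from $y$ to $x$, and $\sig(u)$ equals $u$, $\sin u$, or $\sinh u$ in the three geometries. The standard Grassmannian density $c\sin^{n-k-1}\theta\,\cos^{k-1}\theta\,d\theta$ for this angle cancels the factor $\rho^{k+1-n}$ cleanly; after passing to geodesic polar coordinates around $x$ and changing variables $s=\sig(u)\sin\theta$, the expression $(L^*_r Rf)(x)$ collapses to
\[
C\int_0^\infty \sgn(s-r)\,\Psi(s)\,ds,\qquad \Psi(s)=\int_{\{u\,:\,\sig(u)\geq s\}} (M_u f)(x)\,\bigl(\sig(u)^2-s^2\bigr)^{(k-2)/2}\,w(u)\,du,
\]
with $M_u f$ the geodesic spherical mean of $f$ at distance $u$ from $x$ and $w(u)$ a product of surface-area Jacobians. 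The analogous representation with $\log|s^2-r^2|$ under the $s$-integral holds for $(\tilde L^*_r Rf)(x)$.

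Third, I would apply $\d_r^{k+1}$ and set $r=0$. In case (i) the identity $\d_r\sgn(s-r)=-2\delta(s-r)$ localizes the $s$-integral, and the remaining $k$ derivatives act on $\Psi(r)$; because $(k-2)/2$ is a nonnegative integer, this is a classical computation which, after differentiating under the sign and using the Leibniz rule on the lower limit of the Abel-type integral $\Psi$, reduces to a numerical multiple of $(M_0 f)(x)=f(x)$. In case (ii), $(k-2)/2$ is a half-integer so $\Psi$ is a genuine Abel fractional integral; the logarithmic kernel is chosen precisely so that $\d_r^{k+1}\log|s^2-r^2|$ acts as a Hilbert/Cauchy kernel whose action on $\Psi$ is the inverse of that fractional integration. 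This inversion is carried out in the spirit of Mader, by regularizing with an analytic family such as $|s^2-r^2|^\alpha$, working meromorphically in $\alpha$, and reading off the value at $\alpha=0$; the factor $\pi$ in $\tilde d_X$ arises from the standard jump of the logarithm across the real axis.

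The main obstacle is case (ii): the composition of $\d_r^{k+1}$ with the Abel integral must be interpreted in a distributional/analytic sense, and the behaviour of the resulting Cauchy kernel at the endpoint $r=0$ has to be controlled carefully as $\alpha\to 0$. The non-Euclidean extensions require only the replacement of $u$ by $\sig(u)$ and a corresponding change of $w(u)$, with the one-dimensional $s$-integral left intact; this explains why the constants coincide for $\bbr^n$ and $\bbh^n$ and why the spherical case picks up the extra factor $\sigma_k/\sigma_n$ reflecting the compactness of $X$.
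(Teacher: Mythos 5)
Your reduction is sound and is, in substance, the route the paper itself takes: interchanging the order of integration turns $(L^*_r \vp)(x)$, $\vp=Rf$, into $C\int_0^\infty \sgn(s-r)\,\Psi(s)\,ds$ with $\Psi(s)=\int_s^\infty (\M_t f)(x)\,(t^2-s^2)^{k/2-1}\,t\,dt$ (this is formula (\ref{trf}) with $a(\rho)=\rho^{k+1-n}\sgn(\rho-r)$, after the substitution $s=tv$), and your geometric identity $\rho(x,\xi)=\sig(u)\sin\theta$ with the density $\sin^{n-k-1}\theta\cos^{k-1}\theta\,d\theta$ is the coordinate form of the duality lemmas the paper quotes. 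Your handling of case (i) — one derivative localizes at $s=r$, the polynomial part of $\Psi$ (degree $k-2$ in $r$) dies under the remaining $k$ derivatives, and what is left is the Abel-type integral $(\Lam_r f)(x)$ whose $k$-th derivative at $r=0$ equals $(k-1)!\,f(x)$ — reproduces the paper's computation (\ref{yyt})--(\ref{xdc}) and is correct in outline.

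The genuine gap is case (ii). Everything there hinges on an explicit evaluation of the composed kernel $\psi_k(u)=\int_0^1(1-v^2)^{k/2-1}\log|u^2-v^2|\,dv$ for odd $k$: one must know that $\psi_k$ is a polynomial of degree $k-1$ on $(0,1)$ and equals that \emph{same} polynomial plus the jump term $\pi(-1)^{(k-1)/2}\int_1^u(v^2-1)^{k/2-1}dv$ on $(1,\infty)$ — this is Lemma \ref{lemm}, proved in the paper by a contour/residue argument (Lemma \ref{ook}) in the spirit of Mader. Only with this piecewise structure in hand does $\partial_r^{k+1}$ annihilate the non-local part and leave exactly the even-case integral $(\Lam_r f)(x)$, with the constant $\pi(-1)^{(k-1)/2}$ and its sign coming from the jump. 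Your proposal replaces this computation by the assertion that $\partial_r^{k+1}\log|s^2-r^2|$ ``acts as the inverse of the Abel fractional integration,'' to be justified by analytic continuation in a family $|s^2-r^2|^\a$; you yourself call this the main obstacle, and as written it is a plan rather than a proof — the polynomial-plus-jump identity is precisely what has to be established, and neither the value nor the sign of $\tilde d_X$ is derived without it. A secondary gap of the same kind: the constants $d_X$, $\tilde d_X$ (in particular the factor $\sigma_k/\sigma_n$ for $\bbs^n$ and the coincidence of the $\bbr^n$ and $\bbh^n$ constants) are read off from the shape of your kernel $w(u)$ and the overall constant $C$, which you never compute; in the paper they come out of the explicit duality formulas (\ref{eeqq2}), (\ref{eeqq2s}), (\ref{eeqq2h}) together with (\ref{eeqq1}), (\ref{eeqq1s}), (\ref{eeqq1h}).
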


  \begin{theorem} \label {koom2} Let $\vp=R f$.  If $k$ is even, then
  \be\label {hel1}
  \partial_r^k \lam_X (r)(R^*_r \vp)(x)\Big |_{r=0}=c_X\,f(x),
\ee
where
 \[
\lam_X (r)=\left\{ \!
 \begin{array} {ll} 1 & \mbox{if $\;X=\bbr^n $,}\\
  (1-r^2)^{(k-1)/2} & \mbox{if $\;X=\bbs^n $,}\\
 (1+r^2)^{(k-1)/2} & \mbox{if $\;X=\bbh^n $,}\\
  \end{array}
\right. \]

\[c_X=\left\{ \!
 \begin{array} {ll} (-1)^{k/2}(k-1)!\, \sig_{k-1} & \mbox{if $\;X=\bbr^n, \bbh^n $,}\\
  2(-1)^{k/2}(k-1)!\, \sig_{k-1} & \mbox{if $\;X=\bbs^n $.}\\
   \end{array}
\right.\]
\end {theorem}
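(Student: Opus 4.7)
The plan is to express $(R^*_r Rf)(x)$ as a weighted one-dimensional integral against the geodesic spherical mean of $f$, identify $\lam_X(r)$ as the factor that simultaneously reduces the three cases to a common Abel-type form, and then evaluate $\partial_r^k|_{r=0}$ by iterated integration by parts.

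\textbf{Step 1: Abel-type integral representation.} Starting from the definition (\ref{aaee}) and $\vp=Rf$, interchange the order of integration in
\[(R^*_r Rf)(x)=\intl_{\rho(x,\xi)=r}\Bigl(\intl_\xi f(y)\,d_\xi y\Bigr)d\mu(\xi).\]
For fixed $y\in X$ the isometry group of $X$ acts transitively on $x$-centered configurations, so the invariant measure of the admissible set $\{\xi\in\Xi : y\in\xi,\;\rho(x,\xi)=r\}$ depends only on $s=d(x,y)$ and $r$. An explicit computation --- using the realization $\bbs^n\subset\bbr^{n+1}$ for the sphere and the hyperboloid model for $\bbh^n$, and geodesic polar coordinates about $x$ --- yields
\[(R^*_r Rf)(x)=A_X\intl_{s_0(r)}^{D_X}(M^s f)(x)\bigl(\rho(s)^2-r^2\bigr)^{(k-2)/2}J_X(s,r)\,ds,\]
where $(M^s f)(x)$ is the geodesic spherical mean of $f$ on the sphere of radius $s$ about $x$, and $J_X(s,r)$ is a smooth Jacobian. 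Inspection shows that $J_X(s,r)=\lam_X(r)^{-1}\widetilde J_X(s)$: the curvature factor $\lam_X(r)$ is precisely what the Jacobian contributes from the "height" identities $\cos d(x,\xi)=\sqrt{1-r^2}$ on $\bbs^n$ (resp.\ $\cosh d(x,\xi)=\sqrt{1+r^2}$ on $\bbh^n$), raised to the power $(k-1)/2$ that tracks the $(k-1)$-dimensional Grassmannian parametrising $k$-planes through $y$ at the prescribed distance. Consequently,
\[\lam_X(r)(R^*_r Rf)(x)=A_X\intl_{s_0(r)}^{D_X}(M^s f)(x)\bigl(\rho(s)^2-r^2\bigr)^{(k-2)/2}\widetilde J_X(s)\,ds\]
with $\widetilde J_X(s)$ independent of $r$ and smooth near $s=0$.

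\textbf{Step 2: Evaluation of the $k$-th derivative.} Substitute $u=\rho(s)^2-r^2$ to write the right-hand side as $A_X\,G(r^2)$ with
\[G(t)=\tfrac12\intl_0^{U}H(u+t)\,u^{(k-2)/2}\,du,\]
where $H$ packages the spherical mean and the Jacobians of the substitution and satisfies $H(0)=\widetilde J_X(0)\,h_0\, f(x)$ for an explicit boundary constant $h_0$. Since $r\mapsto G(r^2)$ is an even function of $r$, Taylor expansion gives $\partial_r^k G(r^2)|_{r=0}=\frac{k!}{(k/2)!}\,G^{(k/2)}(0)$. Because $m:=(k-2)/2$ is a non-negative integer, $G(t)$ has a polynomial Abel kernel; $k/2=m+1$ iterated integrations by parts (the boundary terms vanish because $u^m$ vanishes to order $m$ at $u=0$ and $H$ is Schwartz) produce
\[G^{(k/2)}(0)=\tfrac{(-1)^{k/2}(k/2-1)!}{2}\,H(0).\]
Combining via $\frac{k!}{(k/2)!}\cdot\frac{(k/2-1)!}{2}=(k-1)!$ gives the universal factor $(-1)^{k/2}(k-1)!$ in $c_X$. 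Matching $A_X\widetilde J_X(0)h_0$ against the listed constants --- with the extra factor $2$ on $\bbs^n$ coming from the antipodal symmetry of totally geodesic $k$-spheres --- completes the identification.

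\textbf{Step 3: Main obstacle.} The principal technical difficulty is the derivation of the integral representation in Step~1 for the non-Euclidean spaces, and in particular the explicit identification of the Jacobian $J_X(s,r)$ together with its factorisation $\lam_X(r)^{-1}\widetilde J_X(s)$. The Euclidean case, where $\lam_X(r)=1$, serves as a template; for $\bbs^n$ and $\bbh^n$ one must parametrise totally geodesic $k$-submanifolds at a prescribed distance from $x$ by a foot point and a Grassmannian direction, carefully track the invariant measures, and use the sin/sinh normalisation so that the kernel $(\rho(s)^2-r^2)^{(k-2)/2}$ emerges in clean form. Once this Jacobian calculation is secured, the Abel-type evaluation in Step~2 proceeds uniformly across all three constant curvature spaces.
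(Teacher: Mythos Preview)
Your overall strategy matches the paper's: reduce $\lam_X(r)(R^*_r Rf)(x)$ to an Abel-type integral of a spherical mean against the kernel $(t^2-r^2)^{k/2-1}$, then evaluate $\partial_r^k$ at $r=0$. The paper does not rederive the representations of your Step~1 but quotes them as known identities (Lemmas~3.1, 4.1, 5.1, citing \cite{He,Ru02a,Ru02b,Ru04b}); in each case, after passing to the variable $t$ with $\rho(s)=t$, one gets
\[
\lam_X(r)(R^*_r Rf)(x)=A_X\,\sig_{k-1}\int_r^{D_X}(\widetilde M_t f)(x)\,(t^2-r^2)^{k/2-1}\,t\,dt,
\]
with $(\widetilde M_t f)(x)\to f(x)$ as $t\to 0$ and $A_X=1$ for $\bbr^n,\bbh^n$, $A_X=2$ for $\bbs^n$ --- so the factorisation $J_X(s,r)=\lam_X(r)^{-1}\widetilde J_X(s)$ you postulated is correct. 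Where the two diverge is Step~2. The paper's device is shorter than your Taylor expansion plus iterated integration by parts: since $k$ is even, $(t^2-r^2)^{k/2-1}$ is a \emph{polynomial} in $r$ of degree $k-2$, so extending the integral to $\int_0^{D_X}$ gives a polynomial in $r$ of degree $k-2$, annihilated outright by $\partial_r^k$. What remains is $(-1)^{k/2}\int_0^r(\widetilde M_t f)(x)(r^2-t^2)^{k/2-1}t\,dt$, which one writes as $r^k\bigl(f(x)/k+h(r)\bigr)$ with $h(0)=0$, whence $\partial_r^k|_{r=0}$ yields $(-1)^{k/2}(k-1)!f(x)$ directly. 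This polynomial-splitting trick also sidesteps the endpoint behaviour of $H$ at the far boundary: on $\bbs^n$ the function $(\widetilde M_t f)(x)$ has an integrable $(1-t^2)^{-1/2}$ singularity at $t=1$, so your justification ``$H$ is Schwartz'' does not literally apply there, although the conclusion of your computation survives once one differentiates $\int_t^1 H(w)(w-t)^m\,dw$ in $t$ rather than integrating by parts in $u$.
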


Theorem \ref{koom} includes Mader's result for $X=\bbr^n$, $k=n-1$.
Theorem \ref{koom2} gives precise form to the afore-mentioned result by S. Helgason  and extends it to any constant curvature space $X$.

  The following lemma plays the key role in our consideration.
\begin{lemma}\label {lemm}  Let $k$ be an odd positive integer,
\be \label {pooi} \psi_k (u)=\intl_0^1 (1-v^2)^{k/2 -1} \log |u^2-v^2|\, dv.\ee
Then
\be \label {juh}\psi_k (u)=\left\{ \!
 \begin{array} {ll} P_{k-1} (u) & \mbox{if $\;0<u<1$,}\\
 P_{k-1} (u)+\pi (-1)^{(k-1)/2} \Theta (u) & \mbox{if $\;u>1$,} \\
  \end{array}
\right. \ee
where  $P_{k-1} (u)$ is a polynomial of degree $k-1$, $\Theta (u)=\int_1^u (v^2 -1)^{k/2 -1}\, dv$.
\end{lemma}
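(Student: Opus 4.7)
Write $k=2m+1$ with $m\ge 0$. The plan is to reduce the lemma to a single computation of $\psi_k'(u)$, which simplifies to a Cauchy-type integral, and then integrate back, matching the integration constants through the continuity of $\psi_k$ at $u=1$. First, using $\log|u^2-v^2|=\log|u-v|+\log|u+v|$ together with the substitution $v\mapsto -v$ in the second piece,
\[
\psi_k(u)=\int_{-1}^{1}(1-v^2)^{m-1/2}\log|u-v|\,dv,
\]
and then $v=\cos\theta$ yields $\psi_k(u)=\int_0^\pi\sin^{2m}\theta\,\log|u-\cos\theta|\,d\theta$.

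Differentiating under the integral sign (interpreted as a Cauchy principal value when $u\in(0,1)$) gives
\[
\psi_k'(u)=C_m(u):=\int_0^\pi\frac{\sin^{2m}\theta}{u-\cos\theta}\,d\theta,
\]
and the algebraic identity $1-\cos^2\theta=(1-u^2)+(u-\cos\theta)(u+\cos\theta)$ produces the recursion
\[
C_m(u)=(1-u^2)C_{m-1}(u)+u\,I_{m-1},\qquad I_{m-1}=\int_0^\pi\sin^{2m-2}\theta\,d\theta,
\]
valid on both $(0,1)$ and $(1,\infty)$. For the base case, $C_0(u)=\pi(u^2-1)^{-1/2}$ when $u>1$ (classical), while for $u\in(0,1)$ the Weierstrass substitution $t=\tan(\theta/2)$ reduces $C_0(u)$ to $(1-u^2)^{-1/2}\cdot\mathrm{p.v.}\int_0^\infty[(t-a)^{-1}-(t+a)^{-1}]\,dt$ with $a=\sqrt{(1-u)/(1+u)}$, and the antiderivative $\log\frac{|t-a|}{t+a}$ vanishes at both $t=0$ and $t=\infty$, forcing $C_0(u)=0$. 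Induction via the recursion, together with the sign-flipping identity $(1-u^2)(u^2-1)^{m-3/2}=-(u^2-1)^{m-1/2}$, then yields
\[
C_m(u)=Q_m(u)+(-1)^m\pi(u^2-1)^{m-1/2}\text{ on }(1,\infty),\qquad C_m(u)=Q_m(u)\text{ on }(0,1),
\]
where $Q_m$ is the \emph{same} polynomial of degree $2m-1$ on both intervals, defined by $Q_m=(1-u^2)Q_{m-1}+u\,I_{m-1}$ with $Q_0\equiv 0$.

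Integrating $\psi_k'=C_m$ gives $\psi_k(u)=R_m(u)+A$ on $(0,1)$ and $\psi_k(u)=R_m(u)+(-1)^m\pi\,\Theta(u)+B$ on $(1,\infty)$, where $R_m$ is any antiderivative of $Q_m$, a polynomial of degree $2m=k-1$. Continuity of $\psi_k$ at $u=1$ (by dominated convergence, since $(1-v^2)^{m-1/2}\log|1-v^2|$ is integrable on $[0,1]$) together with $\Theta(1)=0$ forces $A=B$, so the common polynomial $P_{k-1}:=R_m+A$ works on both intervals; noting $(-1)^m=(-1)^{(k-1)/2}$ completes the proof. The step I expect to be most delicate is the principal-value evaluation $C_0\equiv 0$ on $(0,1)$ and the justification that differentiation under the integral and the recursion remain valid in the principal-value sense; once these technicalities are handled, the remainder is essentially bookkeeping.
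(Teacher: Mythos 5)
Your argument is correct in substance, but it takes a genuinely different route from the paper's. You work entirely on the real line: after symmetrizing to $\psi_k(u)=\int_{-1}^1(1-v^2)^{k/2-1}\log|u-v|\,dv$ and substituting $v=\cos\theta$, you differentiate to get the principal-value integrals $C_m(u)$, evaluate them by the recursion $C_m=(1-u^2)C_{m-1}+uI_{m-1}$ from the base case $C_0$, and integrate back, matching the constants via continuity at $u=1$. The paper instead proves a more general statement (its Lemma 6.1): for arbitrary non-integer exponents $\a$, $m-\a$ it evaluates $\int_{-1}^1(1+\xi)^\a(1-\xi)^{m-\a}\log|\xi-u|\,d\xi$ by contour integration in the complex plane, computing the residue at infinity of $h(z,u)=(z+1)^\a(z-1)^{m-\a}\log\frac{z-u}{z-1}$ on a suitable two-sheeted cut plane; the stated lemma is then the specialization $\a=m-\a=k/2-1$, where the coefficient $-\pi\cot\a\pi$ of $\Theta_\a$ on $(0,1)$ vanishes. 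What the paper's route buys is generality (non-half-integer exponents, where a nonzero multiple of $\Theta_\a$ survives on $(0,1)$) and completely explicit coefficients $\lam_r$ of the polynomial; what yours buys is an elementary, self-contained real-variable proof of exactly the case needed. Two technical points in your write-up deserve a sentence each rather than a wave: (i) the identity $\frac{d}{du}\int w(v)\log|u-v|\,dv=\mathrm{p.v.}\int\frac{w(v)}{u-v}\,dv$ for $u\in(0,1)$ is classical for densities that are smooth near $u$ (Muskhelishvili-type), and your weight $(1-v^2)^{k/2-1}$ qualifies since its endpoint singularity stays away from $u$, but you should cite or prove it; (ii) continuity of $\psi_k$ at $u=1$ from the left is not literally a dominated-convergence statement with the dominating function you name, because the logarithmic singularity at $v=u$ moves with $u$; the claim is true and follows from the uniform local integrability of the log kernel (split the integral near and away from $v=u$), so replace the one-line justification accordingly. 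With those repairs your proof is complete and correct.
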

This statement is presented in Mader's paper \cite{Mad} in a slightly different form. We obtain (\ref {juh}) as a consequence of a more general result, that might be of independent interest.

Sections 3,4,5 contain the proof of Theorems \ref{koom} and \ref{koom2} for the cases $X=\bbr^n, \bbs^n$, and $\bbh^n$, respectively. Lemma \ref{lemm} is proved in Section 6.

\section{The case $X=\bbr^n$}\label {sec2}

We recall basic definitions. Let $\gnk$ ($1\le k\le n-1$) be the Grassmann manifold of $k$-dimensional linear subspaces $\z$ of $\rn$ and let
$\Xi=\cgnk$  be the
affine Grassmann manifold of all non-oriented $k$-planes $\xi$ in
$\rn$.  Each $k$-plane $\xi$  is parameterized by the pair
$(\zeta, u)$, where $\zeta \in \gnk$ and $ u \in \zeta^\perp$ (the
orthogonal complement to $\zeta $ in $\rn$).
 The manifold  $\cgnk$ will be endowed with the product measure $d\xi=d\zeta du$,
where $d\z$ is the
 $SO(n)$-invariant measure  on $\gnk$ of  total mass
$1$, and $du$ denotes the usual volume element on $\zeta^\perp$.

The $k$-plane transform  $(R f)(\xi)$ of a function $f$ on  $X=\bbr^n$ is  defined by  \be(R f)(\xi)  = \intl_\zeta f(u+v)
dv, \qquad \xi=(\zeta, u) \in \cgnk. \ee
Throughout this section we assume that
 $f$ is a rapidly decreasing infinitely differentiable function.

  The shifted dual
$k$-plane transform is defined by
\be\label {ufu}
(R^*_r \vp) (x) \! = \! \intl_{SO(n)} \!  \vp (\gam \zeta_0 \! + \! x \! + \! r\gam e_n) \,
 d\gam, \ee
 where $\zeta_0$ is an arbitrary fixed $k$-dimensional subspace of $\bbr^n$, and $e_n=(0,\ldots,0,1)$.
  This integral  averages $\vp$ over all $k$-planes  at distance $r$ from $x$. The case $r\!=\!0$ corresponds to the usual dual
Radon transform \cite{He}.

The spherical mean  of a locally integrable function $f$ at a point $x\in \rn$ is defined by
\be
(\M_t f)(x)=\frac{1}{\sig_{n-1}}\intl_{\bbs^{n-1}} f(x+t\theta )
d \,\theta, \qquad t>0, \ee
so that $\lim\limits_{t\to 0}(\M_t f)(x)=f(x)$.

\begin{lemma}  The following equalities hold: \be\label {eeqq1}
(R^*_r Rf) (x)=\sig_{k-1}\intl_r^\infty (\M_t f)(x) (t^2
-r^2)^{k/2 -1} t \, dt; \ee
\be \label {eeqq2}\intl_{\cgnk}
  \vp (\xi)\, a(\rho(x, \xi)) \, d\xi= \sig_{n-k-1} \intl_0^\infty
r^{n-k-1}\, a(r) \,(R^*_r \vp) (x) \, dr. \ee It is assumed  that $f, \; \vp$ and $a$ are measurable functions, for which
either side of the corresponding equality exists in the Lebesgue
sense.
\end{lemma}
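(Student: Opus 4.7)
The plan is to establish the two identities by direct computation, unfolding the definitions and reducing each one to an $SO(n)$-average that can be identified with a spherical mean or with the shifted dual Radon transform.

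For (\ref{eeqq1}), I would start by unfolding $(R^*_r Rf)(x)$ via (\ref{ufu}) together with the definition of $Rf$. Choosing $e_n\in\zeta_0^\perp$ as a unit vector, the point $r\gamma e_n$ lies in $(\gamma\zeta_0)^\perp$, so
\[
(Rf)(\gamma\zeta_0 + x + r\gamma e_n) = \intl_{\gamma\zeta_0} f(x + r\gamma e_n + v)\, dv.
\]
Substituting $v=\gamma w$ with $w\in\zeta_0$, interchanging the order of integration, and using rotation invariance of the $SO(n)$-average gives
\[
(R^*_r Rf)(x) = \intl_{\zeta_0} \bigl(\M_{|re_n + w|}f\bigr)(x)\, dw = \intl_{\zeta_0} \bigl(\M_{\sqrt{r^2 + |w|^2}}f\bigr)(x)\, dw,
\]
since $e_n\perp\zeta_0$. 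Passing to polar coordinates $w=s\omega$ on $\zeta_0\cong\bbr^k$ produces $\sig_{k-1}\int_0^\infty (\M_{\sqrt{r^2+s^2}}f)(x)\,s^{k-1}\, ds$, and the substitution $t=\sqrt{r^2+s^2}$ delivers the right-hand side of (\ref{eeqq1}).

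For (\ref{eeqq2}), I would use the product structure $d\xi = d\zeta\, du$ and introduce shifted polar coordinates in the fiber $\zeta^\perp$: write $u = P_{\zeta^\perp}(x) + r\omega$ with $r\geq 0$ and $\omega$ on the unit sphere of $\zeta^\perp$, so that $du = r^{n-k-1}\, dr\, d\omega$ and $\rho(x,\xi)=r$. After Fubini, the task reduces to showing
\[
\intl_{\gnk}\,\intl_{S^{n-k-1}(\zeta^\perp)} \vp(\zeta, P_{\zeta^\perp}(x)+r\omega)\, d\omega\, d\zeta = \sig_{n-k-1}(R^*_r \vp)(x).
\]
Both sides are $SO(n)$-invariant averages of $\vp$ over the set $\{\xi\in\cgnk:\rho(x,\xi)=r\}$; parametrizing $(\zeta,\omega)=(\gamma\zeta_0,\gamma e_n)$ with $\gamma\in SO(n)$ rewrites both integrals as integrals over $SO(n)$, and uniqueness of Haar measure, with the normalizing factor $\sig_{n-k-1}$ pinned down by taking $\vp\equiv 1$, yields the identity.

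The main obstacle is not analytical but notational: one must keep the parametrizations of an affine $k$-plane consistent when switching between the $(\zeta,u)$-representation and the $\gamma\zeta_0 + x + r\gamma e_n$-representation, verify that $P_{(\gamma\zeta_0)^\perp}(x+r\gamma e_n) = P_{(\gamma\zeta_0)^\perp}(x)+r\gamma e_n$ so that the integrand is evaluated on the correct distance-$r$ slice, and absorb the $\sig_{n-k-1}$ coming from the sphere in the orthogonal complement.
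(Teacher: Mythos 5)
Your argument is correct. Note that the paper itself offers no proof of this lemma: it simply cites Helgason \cite[p.~115, formula (8)]{He} and Rubin \cite[formulas (5.2), (2.29)]{Ru04b}, and your computation is essentially the standard argument behind those references. For (\ref{eeqq1}) the key steps you use are exactly right: unfolding $(R^*_rRf)(x)$ via (\ref{ufu}) with $e_n\in\zeta_0^\perp$, the identity $\int_{SO(n)}f(x+\gamma y)\,d\gamma=(\M_{|y|}f)(x)$ for the normalized Haar measure, polar coordinates on $\zeta_0$, and the substitution $t=\sqrt{r^2+s^2}$, which produces the kernel $(t^2-r^2)^{k/2-1}t$. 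For (\ref{eeqq2}), the decomposition $d\xi=d\zeta\,du$, the shifted polar coordinates $u=P_{\zeta^\perp}x+r\omega$ with $\rho(x,\xi)=r$, and the identification of the two $SO(n)$-invariant probability measures on the compact homogeneous space of pairs $(\zeta,\omega)$, $\omega\in\zeta^\perp\cap\bbs^{n-1}$ (normalization pinned down by $\vp\equiv 1$) complete the proof; the Fubini--Tonelli interchanges are licensed by the lemma's hypothesis that either side exists in the Lebesgue sense.
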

Equality (\ref{eeqq1}) can be found in \cite [p. 115, formula (8)] {He}. Both equalities were proved in \cite [formulas (5.2), (2.29)]{Ru04b}.

\noindent {\bf Proof of Theorem \ref {koom}   (the case $X=\bbr^n$).}  We apply
 (\ref{eeqq2}) to $\vp=Rf$ and replace  $R^*_r \vp= R^*_r Rf$ according to (\ref{eeqq1}). Changing the order of integration, we  obtain the following expression:
\be\label {trf}
\sig_{n-k-1} \sig_{k-1} \intl_0^\infty (\M_t f)(x)\, t^k\, dt \intl_0^1  (tv)^{n-k-1} a(tv)(1-v^2)^{k/2 -1} dv.
   \ee
If $k$ is even, we choose $a(\cdot)$ in the form $a(\rho)=\rho^{k+1-n} \,\sgn (\rho^2-r^2)$. Then (\ref{trf}) yields
\be\label {trfm}
(L^*_r \vp)(x)=
\sig_{n-k-1} \sig_{k-1} \intl_0^\infty (\M_t f)(x)\, t^k\, \psi (r/t)\,dt, \ee
\be\label{vcb}
\psi (u)=\intl_0^1  \sgn (v-u) \,(1-v^2)^{k/2 -1} dv.
 \ee
Setting
$$
c_k=\intl_0^1 (1-v^2)^{k/2 -1} dv, \qquad \Theta (u)=\intl_1^u (v^2 -1)^{k/2 -1}\, dv,
$$
we have
\be\label{vcb1}
\psi (u)=\left\{ \!
 \begin{array} {ll} -c_k +2(-1)^{k/2}\,\Theta (u) & \mbox{if $\;0<u<1 $,}\\
  -c_k  & \mbox{if $\;u>1$.}\\
   \end{array}
\right.
\ee
This allows us to represent $\sig^{-1}_{n-k-1} \sig^{-1}_{k-1}(L^*_r \vp)(x)$ as
\bea
 &&-c_k  \intl_0^r (\M_t f)(x)\, t^k\,dt  +\intl_r^\infty (\M_t f)(x)\, t^k\,[-c_k +2(-1)^{k/2}\Theta (r/t)]\,dt\nonumber\\
 &&=2(-1)^{(k+2)/2} \intl_0^r (\M_t f)(x)\, t^k \Theta (r/t)\,dt\nonumber\\&&+\intl_0^\infty (\M_t f)(x)\, t^k\,[-c_k +2(-1)^{k/2}\Theta (r/t)]\,dt.\nonumber\eea
Since $\int_0^\infty $ is a polynomial of degree $k-1$, then
\bea
&&\partial_r^{k}(L^*_r \vp)(x)=2\sig_{n-k-1} \sig_{k-1}(-1)^{(k+2)/2}\partial_r^{k}\intl_0^r (\M_t f)(x)\, t^k \Theta (r/t)\,dt\nonumber\\
 &&=2\sig_{n-k-1} \sig_{k-1}(-1)^{(k+2)/2}\partial_r^{k-1}\intl_0^r (\M_t f)(x)\, t^k\,[\partial_r \Theta (r/t)]\,dt\nonumber\\
 &&=2\sig_{n-k-1} \sig_{k-1}(-1)^{(k+2)/2}\partial_r^{k-1}(\Lam_r f)(x),\nonumber\eea
\be\label {yyt}
(\Lam_r f)(x)=\intl_0^r (\M_t f)(x)\, (r^2-t^2)^{k/2 -1} t\,dt.\ee
We write the last integral  as $\Lam_1 +  \Lam_2$, where
$$
\Lam_1=f(x)\intl_0^r  (r^2-t^2)^{k/2 -1} t \, dt=\frac{ r^k}{k}\, f(x),
$$
$$
\Lam_2=\intl_0^r  (r^2-t^2)^{k/2 -1} [(\M_t f)(x) - f(x)]\,  t \, dt=r^k h(r),$$
$$
h(r)= \intl_0^1  (1-t^2)^{k/2 -1} [(\M_{rt} f)(x) - f(x)]\,  t \, dt.
$$
This gives
\be\label {xdc}
\lim\limits_{r\to 0}  \partial_r^k (\Lam_r f)(x)=(k-1)!  \, f(x),\ee
and therefore,
\be
\lim\limits_{r\to 0} \partial_r^{k+1}(L^*_r \vp)(x)=2(-1)^{(k+2)/2}\sig_{n-k-1} \sig_{k-1}\,(k-1)!   \, f(x),
\ee
as desired.

If $k$ is odd,  we write (\ref{trf}) with $a(\rho)=\rho^{k+1-n} \,\log |\rho^2-r^2|$
  in the form $\sig_{n-k-1} \sig_{k-1}\, (A+B(r))$, where
 $$
 A=2\intl_0^\infty (\M_t f)(x)\, t^k\,\log t\, dt \intl_0^1  (1-v^2)^{k/2 -1} dv, $$
 \be\label{reas}
 B(r)=\intl_0^\infty (\M_t f)(x)\, t^k \psi_k (r/t)\, dt,
 \ee
and $\psi_k$ being a function (\ref{pooi}). By Lemma \ref{lemm},
\bea
&&B(r)=\intl_0^r (\M_t f)(x)\, t^k \,[ P_{k-1} (r/t) +  \pi (-1)^{(k-1)/2} \Theta (r/t)] \, dt\nonumber\\
&&+\intl_r^\infty (\M_t f)(x)\, t^k \,P_{k-1} (r/t)\, dt\nonumber\\
&&= \!\intl_0^\infty  \!(\M_t f)(x)\, t^k  \,P_{k-1} (r/t) \, dt   \!+  \! \pi (-1)^{(k-1)/2}  \!\intl_0^r  \!(\M_t f)(x)\, t^k \,\Theta (r/t)] \, dt.\nonumber\eea
The integral $\int_0^\infty$ is a polynomial of $r$ of degree $k-1$. Denoting
\be \label {aaz} (\tilde L^*_r \vp)(x)=\intl_{\cgnk}
  \vp (\xi)\, \rho^{k+1-n} \,\log |\rho^2-r^2| \,d\xi,\qquad  \rho\equiv \rho(x, \xi),\ee
 and differentiating $k+1$ times, we obtain
\bea
&&
\partial_r^{k+1}(\tilde L^*_r \vp)(x)\nonumber\\
&&= \pi (-1)^{(k-1)/2}\sig_{n-k-1} \sig_{k-1}\,\partial_r^{k}\intl_0^r (\M_t f)(x)\,
 (r^2-t^2)^{k/2 -1} t \, dt.\nonumber\eea
The last integral is already known; see (\ref {yyt}). Hence, by (\ref{xdc}),
\be\label {mki}
\lim\limits_{r\to 0} \partial_r^{k+1}(\tilde L^*_r \vp)(x)=c\, f(x), \qquad c=\pi (-1)^{(k-1)/2}\sig_{n-k-1} \sig_{k-1}\,(k-1)!,  \nonumber
\ee
which completes the proof.

\vskip 0.3truecm

\noindent {\bf Proof of Theorem \ref {koom2}   (the case $X=\bbr^n$).}
 For $\vp=Rf$, equality  (\ref{eeqq1}) yields
 $(R^*_r \vp) (x)=\sig_{k-1}\, [(-1)^{k/2}\,(\Lam_r f)(x)+(\Del_r f)(x)]$, where
$(\Lam_r f)(x)$ is a function (\ref{yyt}) and
$$
(\Del_r f)(x)=\intl_0^\infty \!(\M_t f)(x) (t^2\!
-\!r^2)^{k/2 -1} t \, dt.$$
Since $(\Del_r f)(x)$ is a polynomial in the $r$-variable of degree $k-2$, owing to (\ref{xdc}), we get
 $$
\lim\limits_{r\to 0}\partial_r^k (R^*_r \vp)(x)=(-1)^{k/2}(k-1)!\, \sig_{k-1}  \, f(x).$$

\section{The case $X=\bbs^n$}

We recall that $\bbs^n$ is the unit sphere in $\bbr^{n+1}$, $d (\cdot, \cdot)$ denotes the geodesic distance on $\bbs^n$, and $\Xi$  stands for  the set of all $k$-dimensional totally
geodesic submanifolds $\xi$ of $\bbs^n$. Each $\xi\in \Xi$ is an intersection of  $\bbs^n$  with the relevant $(k+1)$-plane through the origin. Thus $\Xi$ can be identified with the Grassmann manifold $G_{n+1, k+1}$.
 The  totally geodesic Radon transform $(R f)(\xi)$ of a function $f$ on $\bbs^n$
 is defined by
\be
  (R f)(\xi)=\intl_{\xi} f(x) \, d_\xi x, \qquad \xi \in \Xi,\ee
where  $d_\xi x$ stands for  the usual Lebesgue  measure on $\xi$. Throughout this section we assume that
 $f$ is an even $C^\infty$ function.

We set $\;\bbr^{n+1}=\bbr^{k+1} \oplus \bbr^{n-k},  \quad
 \bbr^{k+1} = \bbr e_1 \oplus \ldots \oplus \bbr  e_{k+1},$\break $
\bbr^{n-k} = \bbr e_{k+2} \oplus \ldots \oplus \bbr e_{n+1},  \quad $
$ e_i $ being the coordinate unit vectors.

 Let $\xi_0=\bbs^k$ be the unit sphere in $\bbr^{k+1}$.
 Given $x \in \bbs^n, \, \xi \in \Xi,$ we denote by $r_x$ an
 arbitrary rotation satisfying $r_x e_{n+1}=x$  and set $\vp_x(\xi)=\vp(r_x \xi)$.

 Let $K=SO(n)$ be the group of rotations about the $x_{n+1}$ axis. For $\theta \in [0, \pi/2]$,
let $g_\theta$ be the rotation in the plane
$(e_{k+1}, e_{n+1})$ with
the matrix $\left[\begin{array} {cc} \sin\theta &\cos\theta\\ -\cos\theta
&\sin \theta
 \end{array} \right]$. The shifted dual
Radon transform of a function $\vp$ on $\Xi$ is defined by
\be\label {ufus}
(R^*_{ \theta} \vp )(x)=\intl_{K} \vp_x (\rho
g_\theta^{-1} \xi_0) \,d\rho.\ee
The case $\theta=0$ corresponds to the usual dual
Radon transform \cite{He}.

A geometric meaning of $(R^*_\theta\vp)(x)$ is as follows.
 Let
 \[x_\theta = g_\theta e_{n+1}=e_{k+1} \cos \theta + e_{n+1}\sin \theta \]
  so that $d(x_\theta, \xi_0)=
\theta$.  Hence,  \be
(R^*_\theta\varphi)(x) = \intl_{d(x, \xi) = \theta} \varphi(\xi) \,
d\mu(\xi)=\intl_{\rho(x,\xi)=r} \vp(\xi) \, d \mu (\xi)\equiv (R^*_r\varphi)(x),\ee
where $r=\sin \theta$, $\rho(x,\xi)=\sin d(x,\xi)$,  $d\mu(\xi)$ is the relevant normalized measure; cf. (\ref{aaee}).

We  need one more averaging operator. Given $x \in \bbs^n$ and
$ t \in (-1, 1)$, denote
 \be\label {2.21}
 ({\bbm}_t f)(x) = {(1-t^2)^{(1-n)/2}\over \sigma_{n-1}}
 \intl_{\{y \in \bbs^n:\; x \cdot y=t\}} f(y) \,d\sigma  (y).\ee
  The integral (\ref{2.21})  is the
mean value of
$f$ on the planar section of $\bbs^n$ by the hyperplane $x \cdot y=t$, and
$ \; d\sigma  (y)$
 stands for the induced Lebesgue measure on this section. Thus, $\lim\limits_{t\to 1}({\bbm}_t f)(x)=f(x)$.

 \begin{lemma}  Let $\rho(x, \xi)=\sin[d(x,\xi)]$.
 The following equalities hold: \be\label {eeqq1s}
(R^*_\theta Rf) (x)=2\sig_{k-1}\int_0^1 (1-\tau^2)^{k/2 -1} ({\bbm}_
{\tau \cos \,\theta} f)(x)\, d\tau; \ee
\bea && \label {eeqq2s} \intl_{\Xi} a(\rho(x, \xi))  \vp (\xi) \,d\xi \\
&&= \frac{\sigma_{n-k-1} \sigma_k}{\sigma_n}
 \intl^1_0(1-r^2)^{(k-1)/2}
r^{n-k-1} a(r) (R^*_{\sin^{-1} r} \vp) (x)\, dr.\nonumber\eea
It is assumed  that $f, \; \vp$ and $a$ are measurable functions, for which
either side of the corresponding equality exists in the Lebesgue
sense.
\end{lemma}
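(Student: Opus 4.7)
The plan is to establish both formulas by reducing integrals over $\Xi = G_{n+1,k+1}$ to integrals on $\bbs^n$, exploiting the $SO(n+1)$-invariance of $d\xi$ and the freedom to take $x = e_{n+1}$ (with $r_x$ the identity) throughout.

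For the first identity (\ref{eeqq1s}), I would unfold
$(R^*_\theta R f)(e_{n+1}) = \int_K \int_{g_\theta^{-1}\xi_0} f(\rho y)\, d_\xi y\, d\rho$
via Fubini, and parametrize points of $\xi_0 = \bbs^k$ as $y = \tau e_{k+1} + \sqrt{1-\tau^2}\,\omega$, where $\tau \in [-1,1]$ and $\omega \in \bbs^{k-1}$ lies in the span of $e_1, \ldots, e_k$. A direct calculation of the induced metric gives $d_\xi y = (1-\tau^2)^{k/2-1}\,d\tau\, d\omega$. The rotation $g_\theta^{-1}$ acts only in the $(e_{k+1}, e_{n+1})$-plane, sending such a $y$ to a vector whose $e_{n+1}$-component equals $\tau\cos\theta$ and whose projection to $\bbr^n$ has length $\sqrt{1-\tau^2\cos^2\theta}$. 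Since $\rho \in K = SO(n)$ fixes $e_{n+1}$ and uniformly covers the horizontal sphere of that radius in $\bbr^n$, the joint $(\rho,\omega)$-average delivers $(\bbm_{\tau\cos\theta}f)(e_{n+1})$ together with an overall multiplicative factor $\sigma_{k-1}$ from the $\omega$-integration. As $f$ is even, the $\tau$-integrand is symmetric and the integration shrinks from $[-1,1]$ to $2\int_0^1$, producing (\ref{eeqq1s}).

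For the second identity (\ref{eeqq2s}), the plan is to decompose $d\xi$ in ``polar'' coordinates about $x$ on $\Xi \cong SO(n+1)/(SO(k+1) \times SO(n-k))$. The foliation $\{\xi : \rho(x,\xi) = r\}$, $r \in (0,1)$, is a single $SO(n)$-orbit at each level, and the normalized invariant measure along this orbit is precisely the measure $d\mu$ entering $(R^*_{\sin^{-1} r}\varphi)(x)$. To compute the radial density, write $x = u + v$ in $\bbr^{n+1}$ with $u \in \xi$ and $v \in \xi^\perp$, so that $|v| = \sin d(x,\xi) = r$ and $|u| = \sqrt{1-r^2}$. Parametrizing $\xi$ by the unit normal $v/|v| \in \bbs^{n-k}$ contributes the factor $r^{n-k-1}\,dr$ through standard radial polar coordinates in $\xi^\perp$, while the remaining choice of a $k$-plane through $u$ inside $v^\perp$ contributes the spherical factor $(1-r^2)^{(k-1)/2}$ coming from the normalization of the $k$-sphere swept out by $u$. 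The overall constant $\sigma_{n-k-1}\sigma_k/\sigma_n$ is then pinned down by testing against $\varphi \equiv 1$ and $a \equiv 1$.

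The main obstacle is careful bookkeeping of normalizations: checking that the trigonometric factors produced by $g_\theta$ yield $\tau\cos\theta$ (and not $\tau\sin\theta$) in the spherical mean, and matching the $SO(n)$-invariant measure on each level set with the normalized $d\mu$ underlying $R^*_r$. Both identities are classical facts of integral geometry on $\bbs^n$ whose counterparts appear in \cite{He} and \cite{Ru04b}, so once the geometric setup is aligned with the definitions (\ref{2.21}) and (\ref{ufus}), the remainder is a routine Fubini-plus-polar-coordinates computation.
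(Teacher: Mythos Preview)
The paper does not give a proof of this lemma at all: it simply records that both identities ``were proved in \cite[pp.~485, 480]{Ru02b}'' and moves on. Your proposal, by contrast, sketches an actual argument---unfolding the $K$-average in $R^*_\theta Rf$, parametrizing $\xi_0=\bbs^k$ by $y=\tau e_{k+1}+\sqrt{1-\tau^2}\,\omega$, tracking the $e_{n+1}$-component under $g_\theta^{-1}$, and then decomposing $d\xi$ in polar coordinates about $x$ to obtain the radial density in (\ref{eeqq2s}). This is essentially the standard derivation one would find in the cited source, and the outline is sound.

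Two small corrections. First, in your polar decomposition for (\ref{eeqq2s}) you write $v/|v|\in\bbs^{n-k}$; since the linear span of $\xi$ is a $(k+1)$-plane in $\bbr^{n+1}$, its orthogonal complement has dimension $n-k$, so the unit vector lies in $\bbs^{n-k-1}$. Second, the relevant reference for the spherical case in this paper is \cite{Ru02b}, not \cite{Ru04b} (the latter handles $\bbr^n$). Neither point affects the correctness of your plan, which supplies what the paper deliberately omits.
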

These equalities were proved in  \cite [pp. 485, 480]{Ru02b}.

\noindent {\bf Proof of Theorem \ref {koom}   (the case $X=\bbs^n$).}  Let $\vp=R f$.  We write (\ref{eeqq1s}) in the form
$$
(R^*_\theta \vp)(x)=\frac{2\sig_{k-1}}{\rho^{k-1}}\int_0^\rho (\rho^2-s^2)^{k/2 -1} ({\bbm}_
{s} f)(x)\, ds, \quad \rho=\cos\,\theta \in (0,1).$$
Setting  $r=\sin\,\theta =\sqrt {1-\rho^2}, \; s=\sqrt {1-t^2}$, and denoting
$$
(\tilde {\bbm}_t f)(x)
=(1-t^2)^{-1/2}({\bbm}_
{\sqrt {1-t^2}} f)(x),$$
so that $\lim\limits_{t \to 0}(\tilde {\bbm}_t f)(x)=f(x)$, we obtain
\be
\label {pli}(R^*_\theta \vp)(x)=\frac{2\sig_{k-1}}{(1-r^2)^{(k-1)/2}}\int_r^1 (\tilde {\bbm}_t f)(x)(t^2-r^2)^{k/2 -1} t\, dt.\ee
Then we apply
 (\ref{eeqq2s}) to $\vp=Rf$ and replace  $R^*_{\sin^{-1} r} \vp$ according to (\ref{pli}). Changing the order of integration, we  obtain
\bea\label {trfs}&&\intl_{\Xi} a(\rho(x, \xi))  \vp (\xi) \,d\xi \\
&&=
c\, \intl^1_0 (\tilde {\bbm}_t f)(x) t^k\, dt\intl_0^1  (tv)^{n-k-1} a(tv)(1-v^2)^{k/2 -1} dv,\nonumber\eea
$$
c=\frac{2\, \sigma_{n-k-1}\, \sigma_k\, \sigma_{k-1}}{\sigma_n}.
$$
This mimics (\ref{trf}) and we continue as in Section \ref {sec2}.

If $k$ is even we
choose $a(\cdot)$ in the form $a(\eta)=\eta^{k+1-n} \sgn (\eta^2 - r^2)$.
Then for $\vp=Rf$,  (\ref{trfs}) yields
\be\label {trfm}
(L^*_r \vp)(x)=
c \intl^1_0 (\tilde {\bbm}_t f)(x)\, t^k\, \psi (r/t)\,dt, \ee
where $\psi$ is a function from the previous section; see (\ref{vcb}), (\ref{vcb1}). Hence, as in Section 3, $(L^*_r \vp)(x)/c$ can be written as
\bea
 &&-c_k  \intl_0^r (\tilde {\bbm}_t f)(x)\, t^k\,dt  +\intl_r^1(\tilde {\bbm}_t f)(x)\, t^k\,[-c_k +2(-1)^{k/2}\Theta (r/t)]\,dt\nonumber\\
 &&=2(-1)^{(k+2)/2} \intl_0^r (\tilde {\bbm}_t f)(x)\, t^k \Theta (r/t)\,dt\nonumber\\&&+\intl_0^1 (\tilde {\bbm}_t f)(x)\, t^k\,[-c_k +2(-1)^{k/2}\Theta (r/t)]\,dt.\nonumber\eea
Since $\int_0^1 (\ldots )$ is a polynomial of degree $k-1$, then verbatim  of the corresponding reasoning from Section 3 yields
\[
\lim\limits_{r\to 0} \partial_r^{k+1}(L^*_r \vp)(x)= c\,(k-1)!  \, f(x)=\frac{2\, \sigma_{n-k-1}\, \sigma_k\, \sigma_{k-1}}{\sigma_n}\,(k-1)!  \, f(x).
\]

 If $k$ is odd, we choose $a(\cdot)$ in the form $a(\eta)=\eta^{k+1-n} \log |\eta^2 - r^2|$ and write (\ref{trfs}) as $c\, (A+B(r))$, where
 $$
 A=2\intl_0^1 (\tilde {\bbm}_t f)(x)\, t^k\,\log t\, dt \intl_0^1  (1-v^2)^{k/2 -1} dv=\const, $$
 $$
 B(r)=\intl_0^1 (\tilde {\bbm}_t f)(x)\, t^k \psi_k (r/t)\, dt,
 $$
and $\psi_k$ being a function (\ref{pooi}); cf. (\ref{reas}). By Lemma \ref{lemm},
\bea
&&B(r)=\intl_0^r (\tilde {\bbm}_t f)(x)\, t^k \,[ P_{k-1} (r/t) +  \pi (-1)^{(k-1)/2} \Theta (r/t)] \, dt\nonumber\\
&&+\intl_r^1 (\tilde {\bbm}_t f)(x)\, t^k \,P_{k-1} (r/t)\, dt\nonumber\\
&&= \!\intl_0^1  \!(\tilde {\bbm}_t f)(x)\, t^k  \,P_{k-1} (r/t) \, dt   \!+  \! \pi (-1)^{(k-1)/2}  \!\intl_0^r  \!(\tilde {\bbm}_t f)(x)\, t^k \,\Theta (r/t) \, dt.\nonumber\eea
The integral $\int_0^1$ is a polynomial of $r$ of degree $k-1$.
Denoting
\be \label {aazs} (\tilde L^*_r \vp)(x)=\intl_{\Xi}
  \vp (\xi)\, \rho^{k+1-n} \,\log |\rho^2-r^2| \,d\xi,\qquad  \rho\equiv \rho(x, \xi),\ee
(cf. (\ref{aaz})) and differentiating $k+1$ times, we obtain
$$
\partial_r^{k+1}(\tilde L^*_r \vp)(x)= c\,\pi (-1)^{(k-1)/2}\,\partial_r^{k}\intl_0^r (\tilde {\bbm}_t f)(x)\,
 (r^2-t^2)^{k/2 -1} t \, dt.$$
As in Section \ref{sec2}, this gives
\be
\lim\limits_{r\to 0} \partial_r^{k+1}(\tilde L^*_r \vp)(x)=c\, f(x),
\ee
$$
c=\frac{2\pi (-1)^{(k-1)/2}\, \sigma_{n-k-1}\, \sigma_k\, \sigma_{k-1}\,(k-1)! }{\sigma_n} \;(=\tilde d_X).
$$

\vskip 0.3truecm

\noindent {\bf Proof of Theorem \ref {koom2}   (the case $X=\bbs^n$).}
 Let $\vp=Rf$ and write  (\ref{pli}) in the form
\[
\label {pliw} R^*_{\sin^{-1} r} \vp=\frac{2\sig_{k-1}}{(1-r^2)^{(k-1)/2}}
\, [I_1 (r) + (-1)^{k/2}\,I_2 (r)],\] where
$$
I_1 (r)\!=\!\!\intl_0^1\!(\tilde {\bbm}_t f)(x)(t^2-r^2)^{k/2 -1} t\, dt, \quad I_2 (r)\!=\!\!\intl_0^r \! (\tilde {\bbm}_t f)(x) (r^2 -t^2)^{k/2 -1} t\, dt,$$
$I_1 (r)$ being a polynomial of degree $k-2$.
Hence, as in (\ref{xdc}), we obtain
\[
\lim\limits_{r\to 0} \partial_r^k [(1-r^2)^{(k-1)/2}\, (R^*_{\sin^{-1} r} \vp)(x)]=2(-1)^{k/2}\sig_{k-1}\,(k-1)!\,f(x).\]

\section{The case $X=\bbh^n$}

 Let $E^{n, 1}, \, n \ge 2$, be the pseudo-Euclidean space of points $x = (x_1, \dots, x_{n+1}) \in \bbr^{n+1}$ with the inner product
\be \label {ijji}
[x,y]=-x_1 y_1 - \ldots - x_ny_n + x_{n+1} y_{n+1}.\ee
We realize the $n$-dimensional hyperbolic space $X=\bbh^n$ as the upper sheet of the two-sheeted hyperboloid
$$
\bbh^n = \{ x \in \bbe^{n, 1}: [x, x] = 1, x_{n+1} > 0\}.
$$
Let $\Xi$ be the set of all $k$-dimensional totally geodesic submanifolds $\xi \subset \bbh^n, \; 1 \le k\le n-1$.
 As usual, $e_1,\dots,e_{n+1}$ denote  the coordinate unit vectors. We set  $\bbr^{n+1}= \bbr^{n-k}\oplus \bbr^{k+1}$, where
 $$
\bbr^{n-k}=  \bbr e_1 \oplus \ldots \oplus \bbr e_{n-k}, \qquad \bbr^{k+1}=\bbr e_{n-k+1} \oplus \ldots \oplus  \bbr e_{n+1},$$
 and identify $\bbr^{k+1}$ with the pseudo-Euclidean space  $E^{k, 1}$.

  In the
following $x_0 =(0,\dots,0,1)$ and $\xi_0 = \bbh^n\cap\bbe^{k,1}= \bbh^k$ denote the origins in $X$ and $\Xi$ respectively; $G=SO_0(n,1)$ is the identity component of the pseudo-orthogonal group $O(n,1)$ preserving the bilinear form (\ref{ijji}); $K=SO(n)$ and $H=SO(n-k) \times SO_0(k, 1)$ are the isotropy subgroups of $x_0$ and $\xi_0$, so that $X = G/K, \; \Xi=G/H$. One can write $f(x) \equiv f(gK), \; \varphi(\xi)\equiv\varphi(gH),\; g\in G$.  The geodesic distance between points $x$ and $y$ in $X$ is defined by $d(x,y) = \cosh^{-1}[x,y]$.

 Given $x \in X, \xi \in \Xi$, we denote by $r_x, r_\xi (\in G)$ arbitrary pseudo-rotations such that $r_x x_0 = x, \; r_\xi \xi_0 = \xi$, and write $$f_\xi (x) = f(r_\xi x), \qquad \vp_x(\xi) = \vp(r_x\xi).$$

The  totally geodesic Radon transform $(R f)(\xi)$ of a rapidly decreasing $C^\infty$ function $f$ on $\bbh^n$
 is defined by
\be
  (R f)(\xi)=\intl_{\xi} f(x) \, d_\xi x=\intl_{SO_0(k,1)} f_\xi(\gamma x_0)d\gamma, \qquad \xi \in \Xi.\ee
 As is \cite{BR}, let
$$
g_\theta\,=\,\left[\begin{array} {ccc} \cosh\theta &0 &\sinh\theta\\ 0 &I_{n-1} &0\\ \sinh\theta &0 &\cosh\theta \end{array} \right],
$$
 where $I_{n-1}$ is the unit matrix of dimension $n-1$.
The shifted dual
Radon transform of a function $\vp$ on $\Xi$ is defined by
\be\label {zss}(R^*_\theta\vp)(x)\,=\,\intl_K\vp_x(\gam g^{-1}_\theta\xi_0)\,d\gam.\ee The case $\theta=0$ corresponds to the usual dual
Radon transform.

A geometric meaning of $(R^*_\theta\vp)(x)$ is as follows.
 Let $d(x,\xi)$ be the geodesic distance between $x \in X$ and $\xi \in
 \Xi$. We set
 \[x_\theta = g_\theta e_{n+1}=
 e_1\sinh \theta + e_{n+1}\cosh \theta \] so that
$ d(x_\theta, \xi_0)=d(x_\theta,e_{n+1})=\theta$.
 Then
\be\label {54s}
d(x, r_x\gam g^{-1}_\theta \xi_0) = \theta \ee  for all $\gam\in K$ and $x\in X$. Thus,  \be
(R^*_\theta\varphi)(x) = \intl_{d(x, \xi) = \theta} \varphi(\xi) \,
d\mu(\xi)=\intl_{\rho(x,\xi)=r} \vp(\xi) \, d \mu (\xi)\equiv (R^*_r\varphi)(x),\ee
where $r=\sinh \theta$, $\rho(x,\xi)=\sinh \,d(x,\xi)$,  $d\mu(\xi)$ is the relevant normalized measure; cf. (\ref{aaee}).
 If $\varphi\in L^1(\Xi)$, then
\be\label {vfv}
\intl_\Xi\varphi(\xi)\,d\xi=\sigma_{n-k-1}\intl^\infty_0 (R^*_\theta
\varphi)(x)\,d\nu(\theta)\qquad \forall x \in X,\ee
$$d\nu(\theta)=(\sinh\theta)^{n-k-1}(\cosh\theta)^k \, d\theta;$$
see \cite[formula (2.30)]{BR}.  Given $x \in \bbh^n$ and
$ t > 1$, let
 \be\label {2.21h}
 (M_t f)(x) = {(t^2-1)^{(1-n)/2}\over \sigma_{n-1}}
 \intl_{\{y \in \bbh^n:\; [x, y]=t\}} f(y) d\sigma  (y)\ee
 be the spherical mean of
$f$ on $X=\bbh^n$, where $ \; d\sigma  (y)$
 stands for the induced Lebesgue measure. Then $\lim\limits_{t\to 1} (M_t f)(x)=f(x)$.

 \begin{lemma}   Let $\rho(x, \xi)=\sinh\,[d(x,\xi)]$.
 The following equalities hold: \be\label {eeqq1h}
(R^*_\theta Rf) (x)=\frac{\sig_{k-1}}{\tau^{k-1}}\intl_\tau^\infty (M_s f)(x) (s^2-\tau^2)^{k/2 -1} \,
 ds, \qquad \tau=\cosh\theta; \ee
\bea \label {eeqq2h} &&\intl_{\Xi}
  a(\rho(x, \xi)) \,\vp(\xi)\,  d\xi\\&&= \sig_{n-k-1} \intl^\infty_0(1+r^2)^{(k-1)/2}
r^{n-k-1} a(r) (R^*_{\sinh^{-1}r} \vp) (x)\, dr.\nonumber\eea
 It is assumed  that $f, \; \vp$ and $a$ are measurable functions, for which
either side of the corresponding equality exists in the Lebesgue
sense.
\end{lemma}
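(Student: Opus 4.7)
The plan is to treat the two identities separately, since they are of different depth. Identity (\ref{eeqq2h}) is a coarea-type consequence of the already-cited formula (\ref{vfv}) from \cite{BR}, while identity (\ref{eeqq1h}) carries the real geometric content and is the hyperbolic analogue of (\ref{eeqq1}) and (\ref{eeqq1s}), proved earlier in Sections 3 and 4.

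\medskip

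To obtain (\ref{eeqq2h}), I would apply (\ref{vfv}) with $\vp(\xi)$ replaced by $a(\rho(x,\xi))\vp(\xi)$. Since $\rho(x,\xi)=\sinh\theta$ is constant on every orbit averaged by $R^*_\theta$, the factor $a(\sinh\theta)$ factors outside the inner integral. The substitution $r=\sinh\theta$, with $dr=\cosh\theta\,d\theta$ and $\cosh\theta=(1+r^2)^{1/2}$, converts $d\nu(\theta)=(\sinh\theta)^{n-k-1}(\cosh\theta)^k d\theta$ into $r^{n-k-1}(1+r^2)^{(k-1)/2}dr$, yielding (\ref{eeqq2h}) directly.

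\medskip

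For (\ref{eeqq1h}), by $G$-equivariance it suffices to take $x=x_0$. From (\ref{zss}) and the definition of $R$, I would write
$(R^*_\theta Rf)(x_0)=\int_K d\gam\int_{\gam g_\theta^{-1}\xi_0}f(y)\,d_\xi y$.
On each totally geodesic submanifold $\gam g_\theta^{-1}\xi_0$ I introduce intrinsic geodesic polar coordinates $(d,\omega)\in[0,\infty)\times S^{k-1}$ centered at the footpoint $\gam g_\theta^{-1}x_0$ of the unique perpendicular from $x_0$, with volume element $(\sinh d)^{k-1}\,dd\,d\omega$. The hyperbolic Pythagorean theorem, applied to the right-angled triangle with vertex $x_0$, footpoint, and $y$, yields $[x_0,y]=\cosh\theta\cdot\cosh d=\tau\cosh d$. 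Setting $s=\tau\cosh d$, so $ds=\tau\sinh d\,dd$, the radial weight becomes $(\sinh d)^{k-1}dd=\tau^{-(k-1)}(s^2-\tau^2)^{k/2-1}ds$. After swapping the order of integration, the inner integral over $(\gam,\omega)\in K\times S^{k-1}$ at fixed $s$ should recover $\sig_{k-1}(M_s f)(x_0)$, and assembling gives (\ref{eeqq1h}).

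\medskip

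The main obstacle is precisely this last step of constant bookkeeping: I must verify that the push-forward of the product measure $d\gam\,d\omega$ under $(\gam,\omega)\mapsto y(\gam,\omega,s)$ coincides with $\sig_{k-1}$ times the normalized $K$-invariant probability measure on the geodesic sphere $\{y\in\bbh^n:[x_0,y]=s\}$. The $K$-transitivity on this sphere guarantees invariance of the push-forward, so only the total mass must be matched; the factor $\sig_{k-1}$ arises because the auxiliary $S^{k-1}$ contributes its own Euclidean surface area while the $K$-average distributes the remaining directions uniformly. Once this normalization is checked, the claimed identity (\ref{eeqq1h}) with constant $\sig_{k-1}/\tau^{k-1}$ follows immediately, completing the proof.
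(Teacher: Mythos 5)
Your proposal is correct, but only half of it follows the paper. For (\ref{eeqq2h}) your route is essentially the paper's own: the authors likewise apply (\ref{vfv}) to $\tilde\vp(\xi)=a(\rho(e_{n+1},\xi))\,\vp_x(\xi)$, use (\ref{54s}) to pull the constant factor $a(\sinh\theta)$ out of the $K$-average (\ref{zss}), and substitute $r=\sinh\theta$ in $d\nu(\theta)$; nothing differs beyond notation. For (\ref{eeqq1h}), however, the paper gives no proof at all --- the identity is quoted from \cite{He} and \cite[formula (2.3)]{Ru02a} --- whereas you derive it directly, via geodesic polar coordinates with density $(\sinh d)^{k-1}\,dd\,d\omega$ on each plane $\gam g_\theta^{-1}\xi_0$, the hyperbolic Pythagorean relation $[x_0,y]=\cosh\theta\,\cosh d$, and a $K$-averaging identification of the inner integral with $\sig_{k-1}(M_sf)(x_0)$. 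This buys a self-contained lemma at the cost of the normalization check you flag, and that check does go through: the push-forward of $d\gam\,d\omega$ is $K$-invariant because it is a $K$-average (not, as you phrase it, because $K$ acts transitively --- transitivity is what gives \emph{uniqueness} of the invariant probability measure), its total mass is $\sig_{k-1}$, and since the geodesic sphere $\{y:[x_0,y]=s\}$ has area $\sig_{n-1}(s^2-1)^{(n-1)/2}$, the resulting average is exactly $\sig_{k-1}(M_sf)(x_0)$ by (\ref{2.21h}). With that sentence supplied (plus the routine remarks that for $\theta>0$ the foot of the perpendicular from $x_0$ is unique and the frame there can be chosen $K$-equivariantly, and that the reduction to $x=x_0$ is legitimate because both sides of (\ref{eeqq1h}) are $G$-equivariant), your computation reproduces (\ref{eeqq1h}) with the stated constant $\sig_{k-1}/\tau^{k-1}$.
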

\begin{proof}
Equality (\ref{eeqq1h}) can be found in \cite {He}; see also \cite [formula (2.3)]{Ru02a}.
Equality (\ref{eeqq2h}) follows from (\ref{vfv}), (\ref{54s}), and (\ref{zss}). Indeed,
$$
I\equiv\intl_{\Xi}
  a(\rho(x, \xi)) \,\vp(\xi)\,  d\xi= \intl_{\Xi} \tilde \vp (\xi)\,  d\xi, \qquad \tilde \vp (\xi)=
  a(\rho(e_{n+1}, \xi)) \,\vp_x(\xi).$$
  Hence,
\bea
 I&=& \sigma_{n-k-1}\intl^\infty_0 (R^*_\theta
\tilde\varphi)(x)\,d\nu(\theta)\nonumber\\
&=& \sigma_{n-k-1}\intl_0^\infty d\nu(\theta) \intl_K a(\sinh d(e_{n+1}, \gam g^{-1}_\theta \xi_0))\, \vp_x(\gam g^{-1}_\theta \xi_0))\,d\gam \nonumber\\
&=& \sigma_{n-k-1}\intl_0^\infty a(\sinh \theta) \,(R^*_\theta \vp) (x) \, d\nu(\theta).\nonumber\eea
This gives (\ref{eeqq2h}).
\end{proof}

\noindent {\bf Proof of Theorem \ref {koom}   (the case $X=\bbh^n$).}  Let $\vp=R f$. Setting  $r=\sinh\,\theta =\sqrt {\tau^2-1}, \; s=\sqrt {1+t^2}$, and denoting
\be\label {bbx3}
(\tilde M_t f)(x) =(1+t^2)^{-1/2}(M_{\sqrt {1+t^2}} f)(x)
,\ee
so that $\lim\limits_{t \to 0}(\tilde M_t f)(x)=f(x)$, we obtain
\be
\label {plih} (R^*_{\sinh^{-1} r} \vp)(x)=\frac{\sig_{k-1}}{(1+r^2)^{(k-1)/2}}\int_r^\infty  (\tilde M_t f)(x)(t^2-r^2)^{k/2 -1} t\, dt.\ee
Then we apply
 (\ref{eeqq2h}) to $\vp=Rf$ and replace  $R^*_{\sinh^{-1} r} \vp$ according to (\ref{plih}).  Changing the order of integration, we  obtain
\bea\label {trfsh}&&\intl_{\Xi} a(\rho(x, \xi))  \vp (\xi) \,d\xi \\
&&=
\sigma_{n-k-1}\,  \sigma_{k-1} \intl^\infty_0 (\tilde M_t f)(x) \,t^k\, dt\intl_0^1  (tv)^{n-k-1} a(tv)(1-v^2)^{k/2 -1} dv.\nonumber\eea
This mimics (\ref{trf}) and we proceed as in the previous sections. Specifically, choose $a(\cdot)$ in the form
$a(\eta)=\eta^{k+1-n} \,\sgn (\eta^2-r^2)$. Then for $\vp=Rf$,  (\ref{trfsh}) yields
\be\label {trfmh}
(L^*_r \vp)(x)=
\sig_{n-k-1} \sig_{k-1} \intl_0^\infty (\tilde M_t f)(x)\, t^k\, \psi (r/t)\,dt; \ee
cf. (\ref{trfm}).
 Hence, as in Section 3,
\be
\lim\limits_{r\to 0} \partial_r^{k+1}(L^*_r \vp)(x)=2(-1)^{(k+2)/2}\sig_{n-k-1} \sig_{k-1}\,(k-1)!  \, f(x).
\ee

If $k$ is odd, we  choose  $a(\eta)=\eta^{k+1-n} \log |\eta^2 - r^2|$ and set
\be \label {aazh} (L^*_r \vp)(x)=\intl_{\Xi}
  \vp (\xi)\, \rho^{k+1-n} \,\log |\rho^2-r^2| \,d\xi,\qquad  \rho\equiv \rho(x, \xi),\ee
as in  (\ref{aazs}). This gives
\be
\lim\limits_{r\to 0} \partial_r^{k+1}(L^*_r \vp)(x)\!=\!c\, f(x),  \quad c\!=\!\pi (-1)^{(k-1)/2}\, \sigma_{n-k-1}\,\sigma_{k-1}\,(k-1)! .\nonumber
\ee

\vskip 0.3truecm

\noindent {\bf Proof of Theorem \ref {koom2}   (the case $X=\bbh^n$).}
 For $\vp=Rf$, equality  (\ref{eeqq1h}) yields
\be
 (R^*_{\sinh^{-1}r} \vp) (x)=\frac{\sig_{k-1}}{(1+r^2)^{(k-1)/2}}\, [(-1)^{k/2}\,(\Lam_r f)(x)+(\Del_r f)(x)],
\ee
where (cf. (\ref{bbx3}), (\ref{yyt}))
$$
(\Lam_r f)(x)=\intl_0^r (\tilde M_t f)(x)\, (r^2-t^2)^{k/2 -1} t\,dt,
$$$$
(\Del_r f)(x)=\intl_0^\infty \!(\tilde M_t f)(x) (t^2\!
-\!r^2)^{k/2 -1} t \, dt.$$
Hence, as in (\ref{xdc}), we obtain
\[
\lim\limits_{r\to 0} \partial_r^k [(1+r^2)^{(k-1)/2}\, (R^*_{\sinh^{-1} r} \vp)(x)]=(-1)^{k/2}\sig_{k-1}\,(k-1)!\,f(x).\]

\section{Proof of Lemma \ref{lemm}}\label {lal}

The statement will be obtained as a consequence of the following more general result, which  generalizes the reasoning from \cite{Mad}.
\begin{lemma}\label {ook} Let $m\in \bbz$, $-1<\a<m+1$, $\a\notin \bbz$, $0<u<\infty$. Then
\bea\label {A.1}
\phi(u)&=&\intl_{-1}^1(1+\xi)^\a(1-\xi)^{m-\a}\log|\xi-u|\,d\xi\\
\label {A.2}
&=&\mu_\a(u)\,\Theta_\a(u)+P_{m+1}(u),
\eea
where
$$
\mu_\a(u)=\left\{
\begin{array}{ll}-\pi\,\cot \a\pi, &  \mbox{if $0<u<1$,}\\
(-1)^m \pi \,\csc \a\pi, & \mbox{if $1<u<\infty$,}\\
  \end{array}
\right. $$
\be\label {A.3}
\Theta_\a(u)=\intl_1^u(1+\xi)^\a|1-\xi|^{m-\a}d\xi,
\ee
and  $P_{m+1}(u)$ is a  polynomial of degree $m+1$ defined  by
\be\label {A.4}
P_{m+1}(u)=\phi(1)-(-1)^m \pi \,\csc \a\pi\,\sum_{r=1}^{m+1}  \lam_r\, (u^r -1),\ee
$$
\lam_r= \frac{1}{r}\sum_{\ell=1}^{m+1-r} (-1)^\ell {m-\a\choose \ell } {\a\choose m+1-r-\ell}.
$$
\end{lemma}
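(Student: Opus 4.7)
The plan is to reduce Lemma \ref{ook} to evaluating a Cauchy-type singular integral by differentiating $\phi$ in $u$, to handle that integral by contour integration around the cut $[-1,1]$, and then to integrate back to recover $\phi$, with $\phi(1)$ fixing the constant. Concretely, since $\partial_u \log|\xi-u| = 1/(u-\xi)$ in the principal-value sense, differentiation gives
\[ \phi'(u) = I(u) := \int_{-1}^1 \frac{(1+\xi)^\alpha(1-\xi)^{m-\alpha}}{u-\xi}\,d\xi, \]
a Cauchy principal value for $u \in (-1,1)$.

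To evaluate $I$ I would introduce $F(z) = (z+1)^\alpha (z-1)^{m-\alpha}$, analytic on $\bbc \setminus [-1,1]$ with the branch normalized so that $F(z) > 0$ for $z > 1$. Two auxiliary computations are needed: (i) the jump of $F$ across the cut, which follows from $\arg(z-1) = \pm\pi$ just above and below the cut and comes out to
\[ F(\xi + i0) - F(\xi - i0) = -2i(-1)^m \sin(\pi\alpha)(1+\xi)^\alpha(1-\xi)^{m-\alpha},\qquad \xi \in (-1,1); \]
and (ii) the expansion $F(z) = z^m(1+1/z)^\alpha(1-1/z)^{m-\alpha}$ at infinity via the binomial series, which yields a polynomial $p$ of degree $m$ with $F(z) = p(z) + O(1/z)$, whose coefficients are exactly the double sums $\sum_k(-1)^k\binom{\alpha}{\cdot}\binom{m-\alpha}{k}$ appearing in the formula for $\lambda_r$. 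Since $F-p$ is analytic off $[-1,1]$ and decays at infinity, Cauchy's formula applied on a contour consisting of a large circle plus a dogbone around $[-1,1]$, combined with the jump formula, yields the Plemelj--Sokhotski representation
\[ F(z) - p(z) = \frac{(-1)^m \sin(\pi\alpha)}{\pi}\int_{-1}^1 \frac{(1+\xi)^\alpha(1-\xi)^{m-\alpha}}{z-\xi}\,d\xi, \qquad z \notin [-1,1]. \]
For $u > 1$, setting $z = u$ gives $I(u) = (-1)^m \pi \csc(\pi\alpha)(F(u) - p(u))$ directly, with singular coefficient $\mu_\alpha(u) = (-1)^m \pi \csc(\pi\alpha)$. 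For $u \in (0,1)$, averaging the representation as $z \to u \pm i0$, together with $F(u \pm i0) = (1+u)^\alpha(1-u)^{m-\alpha}e^{\pm i\pi(m-\alpha)}$ and $\cos(\pi(m-\alpha)) = (-1)^m\cos(\pi\alpha)$, replaces $\csc(\pi\alpha)$ by a factor proportional to $\cot(\pi\alpha)$ on the singular term, giving the other case of $\mu_\alpha$.

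Since $\Theta_\alpha'(u) = (1+u)^\alpha|1-u|^{m-\alpha}$ and $\Theta_\alpha(1) = 0$, integrating $I$ from $u = 1$ turns the singular part into $\mu_\alpha(u)\,\Theta_\alpha(u)$, while the polynomial piece becomes $-(-1)^m \pi \csc(\pi\alpha)\int_1^u p(v)\,dv$. Expanding $\int_1^u v^{r-1}\,dv = (u^r-1)/r$ and substituting the binomial coefficients of $p$ yields the stated closed form for $P_{m+1}$ and for $\lambda_r$. The main technical obstacle is the polynomial-subtraction step: because $F(z) \sim z^m$ at infinity does not decay, one cannot apply Cauchy's theorem directly to $F(z)/(z-w)$, and the subtraction of $p$ is forced in order to make the large-circle contribution vanish as $R \to \infty$. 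The remaining work --- the branch analysis for $F$ and the passage through Plemelj--Sokhotski boundary values --- is routine but requires attention to detail.
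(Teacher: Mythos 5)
Your route is genuinely different from the paper's: the paper never differentiates $\phi$, but applies the residue theorem at infinity directly to $h(z,u)=(z+1)^{\alpha}(z-1)^{m-\alpha}\log\frac{z-u}{z-1}$ on a dogbone that also encloses the segment between $1$ and $u$, so that $\Theta_\alpha(u)$, $\phi(u)-\phi(1)$ and the polynomial all appear in a single identity; you instead pass to $\phi'(u)=\mathrm{p.v.}\int_{-1}^{1}(1+\xi)^{\alpha}(1-\xi)^{m-\alpha}(u-\xi)^{-1}d\xi$, evaluate this finite Hilbert transform by Sokhotski--Plemelj applied to $F-p$, and integrate back from $u=1$. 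That plan is viable, and the points you flag (polynomial subtraction to kill the large circle, vanishing of the endpoint circles since the exponents exceed $-1$, differentiation under the principal value, integrability of $\phi'$ across $u=1$, which holds because $m-\alpha>-1$) are indeed routine. For $u>1$ your method reproduces the stated $\mu_\alpha=(-1)^m\pi\csc(\alpha\pi)$ and the structure of $P_{m+1}$.

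The gap is precisely in the step you wave through, the sign bookkeeping on $(0,1)$. Executing your own outline, the average of boundary values is $\tfrac12\bigl(F(u+i0)+F(u-i0)\bigr)=(-1)^m\cos(\pi\alpha)(1+u)^{\alpha}(1-u)^{m-\alpha}$, whence
\[
\phi'(u)=\pi\cot(\pi\alpha)\,(1+u)^{\alpha}(1-u)^{m-\alpha}-(-1)^m\pi\csc(\pi\alpha)\,p(u),\qquad 0<u<1,
\]
and integration from $1$ gives $\mu_\alpha(u)=+\pi\cot(\alpha\pi)$ on $(0,1)$ with the convention $\Theta_\alpha(u)=\int_1^u$, \emph{not} $-\pi\cot(\alpha\pi)$ as printed. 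The $+$ sign is the true one: for $m=0$, $0<\alpha<1$, the model identity $\mathrm{p.v.}\int_0^\infty s^{-\alpha}(s-x)^{-1}ds=\pi\cot(\pi\alpha)x^{-\alpha}$ shows $\phi'(u)\sim 2^{\alpha}\pi\cot(\pi\alpha)(1-u)^{-\alpha}$ as $u\to1^{-}$, matching $+\pi\cot(\pi\alpha)\Theta_\alpha'(u)$ and blowing up with the sign opposite to what the printed formula forces. Likewise, the coefficient of $u^{r}/r$ in the expansion of $F$ at infinity is $\sum_{\ell=0}^{m+1-r}(-1)^{\ell}\binom{m-\alpha}{\ell}\binom{\alpha}{m+1-r-\ell}$; the inner sum must start at $\ell=0$ (for $m=0$ it forces $\lambda_1=1$, while the displayed formula gives $0$), so your computation does not literally ``yield the stated closed form for $\lambda_r$'' either. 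So your proposal, carried out correctly, proves a corrected version of the lemma rather than the statement as printed; asserting that the Plemelj step ``gives the other case of $\mu_\alpha$'' would require a compensating sign error. (These discrepancies are evidently misprints in the statement --- the paper's own Case 2 carries the same sign slip, and the $\ell=0$ term is dropped --- and they are harmless in the application, where $\alpha$ is a half-integer, so $\cot(\alpha\pi)=0$, and only $\deg P_{m+1}\le m+1$ is used; but in your write-up the $(0,1)$ case must be worked out explicitly with the correct sign rather than asserted to coincide with the displayed $\mu_\alpha$.)
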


\begin{proof} The result will be obtained using the classical  tools of complex analysis. Consider  auxiliary functions
$$
\zeta (z)=(z+1)^\a(z-1)^{m-\a}, \qquad h(z,u)=\zeta (z)\, \log\frac {z-u}{z-1}; \qquad z\in \bbc.$$
To fix a single branch of  $\zeta (z)$, we cut the $z$-plane along the segment $-1\le\xi\le 1$ and choose $
\arg(z\pm 1)\in[0,2\pi].$ Then
$\zeta (z)\sim z^m,$ $z\to\infty$, and
\be\label {A.11}
\zeta (\xi\pm i0)=\left\{
\begin{array}{ll} (-1)^me^{\mp i\a\pi}\,|1+\xi|^\a|1-\xi|^{m-\a},  &  \mbox{if $-1<\xi<1$,}\\
{}\\
|1+\xi|^\a|1-\xi|^{m-\a},  & \mbox{if $1<\xi<\infty$.}\\
  \end{array}
\right.\ee

{\bf Case 1.} Let $1<u<\infty$. Continue the cut $[-1,1]$ along the real axis up to the point $u$
and fix  a branch of the logarithmic function so that $\arg(z-u)\in[0,2\pi]$ and,
as before, $\arg(z-1)\in[0,2\pi]$.  Then
$$
\log\frac {z-u}{z-1}\sim 0, \quad z\to\infty,
$$
and
\be\label {A.12}
\log\frac {\xi\pm i0-u}{\xi\pm i0-1}=\log\left|\frac {\xi-u}{\xi-1}\right|\pm
\left\{\begin{array}{cc}
i\pi, & \; 1<\xi<u,\\
0, & \; 0<\xi<1.\\
\end{array}\right.
\ee
At the point $z=\infty$, the function $h(z,u)$ has an integer order, and its residue is
\be\label {A.13}
-c_{-1}=\mathop{\rm res}\limits_{z=\infty}h(z,u)=\frac {1}{2\pi i}\intl\limits_{C^-} h(z,u)dz.
\ee
Here $c_{-1}$ is the coefficient at $z^{-1}$ in the Laurent  series of $h(z,u)$
 in a neighborhood of the  infinite point, and $C^-$ is a circle of  radius $R$ centered at $z=0$.  It is assumed that $R$ is big enough, so that $z=\infty$ is the only singular point outside of $C^-$. The direction of integration on $C^-$
is  chosen clockwise.
Deforming the contour
$C^-$, we obtain
\be\label {A.14}
\mathop{\rm res}\limits_{z=\infty}h(\xi,u)=\frac {1}{2\pi i}\int\limits_L h(\xi,u)\,d\xi,
\ee
where $L=[u,1]^-\cup [1,-1]^-\cup [-1,1]^+\cup[1,u]^+$, and $[a,b]^+$ and $[a,b]^-$ are the upper
and lower sides of a cut $[a,b]$, respectively.
Owing to (\ref{A.11}) and (\ref{A.12}), (\ref{A.14}) yields
\be\label {A.15}
\mathop{\rm res}\limits_{z=\infty}h(z,u)=\Theta_\a (u)-\frac {(-1)^m\sin\a\pi}{\pi}\int_{-1}^1
(1+\xi)^\a(1-\xi)^{m-\a}\log\left|\frac {\xi-u}{\xi-1}
\right|\,d\xi.\nonumber\ee
In other words,
\be\label {kuku}
\phi(u)=\phi(1)+\mu_\a(u)\,\Theta_\a(u)-\mu_\a(u)\, \mathop{\rm res}\limits_{z=\infty}h(z,u),\ee
and it remains to evaluate the residue on the right-hand side.
 We set
\be\label {A.16}
h(z,u)=-W(z,u)+W(z,1),
\ee
where
\bea\label {A.17}
W(z,u)&=&-z^m\left(1+\frac {1}{z}\right)^{\a}\left(1-\frac {1}{z}\right)^{m-\a}\log\left(1-\frac {u}{z}\right)\nonumber\\
&=&\sum_{p=0}^\infty a_p \, z^{m-p}\,
\sum_{q=0}^\infty b_q\, z^{-q}\,
\sum_{r=1}^\infty c_r \,z^{-r},
\nonumber\eea
$$
a_p =  {\a\choose p}, \qquad b_q=  (-1)^q \,  {m-\a\choose q}, \qquad
c_r=  \frac{u^r}{r}.
   $$
A simple calculation gives $W(z,u)=\gam z^{-1} +$  terms of degree $\neq -1$, with
\be
\gam=\sum_{r=1}^{m+1} \lam_r\, u^r, \qquad \lam_r= \frac{1}{r}\sum_{\ell=1}^{m+1-r} (-1)^\ell {m-\a\choose \ell } {\a\choose m+1-r-\ell}.\nonumber\ee
Hence,
\be\label {ppp}
\mathop{\rm res}\limits_{z=\infty}h(z,u) = \sum_{r=1}^{m+1}  \lam_r\, (u^r -1)= Q_{m+1}(u),\ee
 and, by (\ref{kuku}), we finally get $\phi(u)=\mu_\a(u)\,\Theta_\a(u)+P_{m+1}(u)$, where
 \be\label {99o} P_{m+1}(u)=\phi(1)-\mu_\a(u)\,Q_{m+1}(u)=\phi(1)-\frac {(-1)^m \pi}{\sin \a\pi} \,Q_{m+1}(u).\ee

\noindent {\bf Case 2.} Let  $0<u<1$. Then the substitute for (\ref{A.14})  has the form
\be\label {A.14s}
\mathop{\rm res}\limits_{z=\infty}h(z,u)=\frac {1}{2\pi i}\int\limits_{L'} h(\xi,u)\,d\xi,
\ee
where $L'= [1,u]^-\cup [u,-1]^-\cup[-1,u]^+\cup [u,1]^+$, and the residue of  $h(z,u)$  at the infinite point remains unchanged.
 Now, (\ref{A.3}), (\ref{A.11}), the counterpart of formula (\ref{A.12}) for this case
\be\label {A.14d}
\log\frac {\xi\pm i0-u}{\xi\pm i0-1}=\log\left|\frac {\xi-u}{\xi-1}\right|\pm
\left\{\begin{array}{cc}
i\pi, & \; u<\xi<1,\\
0, & \; 0<\xi<u,\\
\end{array}\right.
\ee
   and (\ref{ppp})  yield
\bea
 Q_{m+1}(u)&=&(-1)^{m-1}\cos \,\a\pi\,\Theta_\a (u) \nonumber\\&-&\frac {(-1)^m\sin\a\pi}{\pi}\int_{-1}^1
(1+\xi)^\a(1-\xi)^{m-\a}\log\left|\frac {\xi-u}{\xi-1}
\right|\,d\xi.
 \nonumber\eea
This gives
\bea
\phi(u)&=&\phi(1)-\pi\cot \a\pi \,\Theta_\a(u)-\frac {(-1)^m \pi}{\sin \a\pi}\, Q_{m+1}(u)\nonumber\\
&=&\mu_\a (u)\,\Theta_\a(u)+P_{m+1}(u),\nonumber\eea
as desired.
\end{proof}

Let us turn to the particular case $\a=m-\a=\frac {k}{2}-1$, where $k$ is an odd  positive integer.  Then
\be \phi (u)=\intl_{-1}^1(1-\xi^2)^{k/2 -1}\log|\xi-u|\,d\xi,\ee
and Lemma \ref {ook}  yields
\be
\phi (u)=P_{k-1}(u)+\left\{
\begin{array}{ll} 0, &  \mbox{if $0<u<1$,}\\
\pi(-1)^{(k-1)/2}\Theta (u), & \mbox{if $1<u<\infty$,}\\
  \end{array}
\right. \ee
where $P_{k-1}(u)$ is a  polynomial of degree $k-1$, and
$$\Theta (u)=\intl_1^u (\xi^2-1)^{k/2 -1}\,d\xi.
$$
Since
 $$\phi (u)=\Big (\intl_{-1}^0+\intl_0^1 \Big ) (\ldots)= \intl_0^1(1-\xi^2)^{k/2 -1}\log|\xi^2-u^2|\,d\xi,$$
we arrive at Lemma \ref{lemm}.


\begin{thebibliography}{99}

\bibitem   [BR] {BR}  Berenstein, C. A. and  Rubin, B.   Totally geodesic Radon transform
of $L^p$-functions on real hyperbolic space.  \textit{Fourier analysis and
convexity},  37--58, Appl. Numer. Harmon. Anal., Birkhäuser Boston,
Boston, MA, 2004.

\bibitem   [D] {Dea}  Deans, S. R.  The Radon transform and some of its applications, Dover Publ. Inc., Mineola, New York, 2007.

\bibitem [GGG] {GGG}
 I.M. Gel'fand,    Gindikin S.G. and   Graev, M.I. Selected topics
in integral geometry, Translations of Mathematical Monographs, AMS,
Providence, Rhode Island, 2003.

\bibitem  [H] {He}   Helgason, S.  Integral geometry and Radon transform,
Springer, New York-Dordrecht-Heidelberg-London, 2011.

\bibitem  [M] {Mad}  Mader, Ph. \"Uber die Darstellung von Punktfunktionen im
n-dimensionalen euklidischen Raum
durch Ebenenintegrale, \textit{Math. Zeit.} \textbf {26} (1927), 646--652.

\bibitem [R] {Rad}  Radon, J.  \"Uber die Bestimmung von
Funktionen durch ihre Integralwerte l\"angs gewisser
Mannigfaltigkeiten, Ber. Verh. S\"achs. Akad. Wiss. Leipzig, Math.
- Nat. Kl., \textbf{69} (1917), 262--277 (Russian translation in the
Russian edition of S. Helgason, The Radon transform, Moscow, Mir,
1983, pp. 134--148; English translation in \cite{Dea}).

\bibitem  [Ru02a] {Ru02a}   Rubin,  B.  Helgason-Marchaud inversion formulas
for Radon transforms, \textit{Proc. Amer. Math. Soc.} \textbf{ 130} (2002),
 3017--3023.

\bibitem [Ru02b]  {Ru02b}  \bysame  Inversion formulas for the spherical
 Radon transform and the generalized cosine transform.  \textit{Advances in Appl.
Math.} \textbf{29} (2002), 471--497.


\bibitem [Ru04]  {Ru04b} \bysame Reconstruction of functions from their integrals over
 $k$-dimensional planes, {\it Israel J. of Math.}  \textbf{141} (2004), 93--117.

\end{thebibliography}
\end{document}